\newcommand{\spl}[2]{{\bf #1}_{#2}}
\newtheorem{theorem}{Theorem}[section]
\newtheorem{corollary}[theorem]{Corollary}
\newtheorem{proposition}[theorem]{Proposition}
\newtheorem{definition}[theorem]{Definition}
\newtheorem{remark}[theorem]{Remark}
\newtheorem{example}[theorem]{Example}
\newtheorem{question}[theorem]{Question}
\begin{document}
\date{}
\title{Generalized Splines on Arbitrary Graphs}
\thanks{We are very grateful for the contributions of Arielle McCoy and Tia Pilaroscia, and for helpful conversations with Matthias Franz, Ruth Haas, and Hal Schenck.  We are indebted to an anonymous referee for very useful comments.
} \thanks{This work was partially supported by NSF grant DMS--0611020 through the Center for Women in Mathematics at Smith College.  Julianna Tymoczko was partially supported by a Sloan Fellowship and by NSF grants DMS-1101170 and DMS-0801554.}
\author{Simcha Gilbert} 
\address{First and third authors: Department of Mathematics and Statistics, Smith College, Northampton, MA 01063 U.S.A.}
\email{sgilbert@smith.edu}

\author{Shira Polster}
\address{Second author: Department of Mathematics, 2108 SAS Hall, North Carolina State University, Raleigh, NC  27695 U.S.A.}
\email{spolste@ncsu.edu}

\author{Julianna Tymoczko} 
\email{jtymoczko@smith.edu}

\begin{abstract}
Let $G$ be a graph whose edges are labeled by ideals of a commutative ring.  We introduce a {\em generalized spline}, which is a vertex-labeling of $G$ by elements of the ring so that the difference between the labels of any two adjacent vertices lies in the corresponding edge ideal. Generalized splines arise naturally in combinatorics ({\em algebraic splines} of Billera and others) and in algebraic topology (certain equivariant cohomology rings, described by Goresky-Kottwitz-MacPherson and others). The central question of this manuscript asks when an arbitrary edge--labeled graph has nontrivial generalized splines.  The answer is `always', and we prove the stronger result that 
the module of generalized splines contain a free submodule whose rank is the number of vertices in $G$.  We describe the module of generalized splines when $G$ is a tree, and give several ways to describe the ring of generalized splines as an intersection of generalized splines for simpler subgraphs of $G$.  We also present a new tool which we call the {\em GKM matrix}, an analogue of the incidence matrix of a graph, and end with open questions.
\end{abstract}

\maketitle


 
 \section{Introduction}

The goal of this paper is to generalize and extend combinatorial constructions that have become increasingly important in many areas of algebraic geometry and topology, as well as to establish a firm combinatorial footing for these constructions.  Given a commutative ring $R$ with identity, an arbitrary graph $G=(V,E)$, and a function $\alpha: E \rightarrow \{\textup{ideals }I \subseteq R\}$,  we will define a ring of {\em generalized splines}.  This paper does four things:
\begin{enumerate}
\item proves foundational results about generalized splines;
\item completely analyzes the ring of generalized splines for trees and shows families of generalized splines for arbitrary cycles; 
\item produces an $R$-submodule within the ring of generalized splines that has rank $|V|$, as long as $R$ is an integral domain; and 
\item shows that the study of generalized splines for arbitrary graphs can be reduced to the case of different subgraphs, especially cycles or trees. 
\end{enumerate}

Generalized splines as we define them are a subring of a product of copies of $R$:

\begin{definition}
The ring of generalized splines $R_{G}$ of the pair $(G, \alpha)$ is defined by
\[R_{G} = \{\spl{p}{} \in R^{|V|}: \textup{ for each edge } e = uv, \textup{ the difference }\spl{p}{u} - \spl{p}{v} \in \alpha(e)\}.\]
\end{definition}
Figures \ref{fig:K4 spline} and \ref{fig:nonsplines} display examples and nonexamples of elements of $R_{K_4}$ in the case when each ideal $\alpha(e)$ is generated by a single ring element (given inside $\langle \cdots \rangle$).  The vertices are labeled with elements of $R_{K_4}$ and the collection of vertex--labels in Figure \ref{fig:K4 spline} is a  generalized spline. Note that Figures \ref{fig:P_4} and \ref{fig:C_4} are not generalized splines on $K_4$ but {\em are} generalized splines on the subgraphs in bold.  These examples hold for any ring $R$ and any choice of elements $\alpha_1, \alpha_2,\ldots, \alpha_6 \in R$ to generate the ideals $\alpha(e)$.

The name {\em generalized spline} comes from one of the important constructions that we extend.  Historically, engineers modeled complicated objects like ships or cars by identifying important points of the vehicle and then attaching thin strips of wood (called {\em splines}) at those points to approximate the entire hull.  

Mathematically, a spline is a collection of polynomials on the faces of a polyhedral complex that agree (modulo a power of  a linear form) on the intersections of two faces. We refer to this classical tradition as the {\em analytic} approach to splines; it studies the vector space $C^r_k(D)$ where $D$ is a simplicial complex, $r$ is the order of smoothness to which the polynomials agree over faces, and $k$ is the maximal degree of a polynomial supported on a maximal face.  Splines are used in approximation theory and numerical analysis, with applications in data interpolation, to create smooth curves in computer graphics, to find numerical solutions to partial differential equations, and for other applications \cite{Bar84, CL1, CL2}.  

  In the analytic tradition, mathematicians seek individual splines satisfying particular properties as well as characterizations of the space of splines associated to a given object---for instance, the dimension \cite{Al1, Al2, AS1, AS2, CH, GP, Schu1, Schu2, Schu3} or basis \cite{APS2, APS1, MorS, Schu4} for a space of splines.  Alfeld-Schumaker's work is both representative and epitomic: a seminal result of theirs proved a bound on the dimension of $C^r_k(D)$ when $D$ is a planar simplicial complex and $k \geq 3r+1$  \cite{AS2}.

Billera pioneered the study of what some call {\em algebraic} splines, introducing methods from homological and commutative algebra to prove a conjecture of Strang on the dimension of $C^1_k(D)$ when $D$ is a generic planar simplicial complex \cite{B}.  In the abstract algebraic setting, mathematicians generalize the class of geometric objects associated to splines (e.g. work of Schumaker, Billera and Rose, or McDonald and Schenck that studies piecewise polynomials on a polyhedral complex rather than just a simplicial complex  \cite{Schu2, BR1, MS}) and study algebraic invariants of modules other than dimension or bases (e.g.  the more fundamental question of freeness \cite{H, BR2, Y, DS,  Di}, or more algebraically involved questions like  computing coefficients of the Hilbert polynomial \cite{BR1, SS1, Sch1, M, MS}, identifying the syzygy module of the span of the edge ideals \cite{Schu1, R1, R2}, or analyzing algebraic varieties associated to the piecewise polynomials \cite{Wa, ZW1, ZW2}). Billera and Rose introduced a description of splines in terms of the dual graph of the polyhedral complex that is equivalent to the piecewise-polynomial definition for so-called {\em hereditary} complexes \cite{BR1}.  Many others used Billera and Rose's approach in later research \cite{MS, R1, R2}, and it is our starting point.

In what we might call the {\em topological} approach to splines, geometers and topologists recently and independently rediscovered splines as equivariant cohomology rings of toric and other algebraic varieties (though they rarely use the name `splines') \cite{Bri96, P, BFR, Sch2}.  Goresky--Kottwitz--MacPherson developed a combinatorial construction of equivariant cohomology called {\em GKM theory} \cite{GKM}, which can be used for any algebraic variety $X$ with an appropriate torus action.  Unknowingly, they described precisely the dual--graph construction of splines: GKM theory builds a graph $G_X$ whose vertices are the $T$-fixed points of $X$ and whose edges are the one-dimensional orbits of $X$.  Each edge of this graph is associated to a principle ideal $\langle \alpha_e \rangle$ in a polynomial ring, coming from the weight $\alpha_e$ of the torus action on the one-dimensional torus orbits in $X$.  The GKM ring associated to the pair $(G_X, \alpha)$ agrees with what we call the ring of generalized splines for $(G_X, \alpha)$.  The main theorem of GKM theory asserts that under appropriate conditions, this GKM ring is in fact isomorphic to the equivariant cohomology ring $H^*_T(X; \mathbb{C})$.  (Their work relies on earlier work of many others, including a much more general result of Chang-Skjelbred that points to one way to extend this work topologically to cases in which the ideals $\langle \alpha_e \rangle$ are no longer principal \cite{CS1}.) We omit details of the topological background here because there are several excellent surveys \cite{KT, T1, Ho}.  However, GKM theory is a powerful tool in Schubert calculus \cite{GT, KT}, symplectic geometry \cite{GT, GHZ, HH},  representation theory \cite{F}, and other fields.  (In some of these applications, the {\em ring} structure of splines is more important than the module structure.)

We note that the most powerful results in each of these approaches are not replicable using other approaches.  For instance Mourrain and Villamizar recently used the algebraic approach to try to reprove Alfeld and Schumaker's results, but could not attain their bound \cite{MV}.

\begin{figure*}[h]
\begin{picture}(150,110)(-20,0)
\small
\put(0,10){\circle*{5}}
\put(0,100){\circle*{5}}
\put(90,10){\circle*{5}}
\put(90, 100){\circle*{5}}
\put(0,10){\line(0,1){90}}
\put(0,100){\line(1,-1){90}}
\put(0,10){\line(1,0){90}}
\put(90,10){\line(0,1){90}}
\put(0,100){\line(1,0){90}}
\put(0,10){\line(1,1){90}}
\put(35,105){\textcolor{gray}{$\langle \alpha_1\rangle$}}
\put(92,50){\textcolor{gray}{$\langle \alpha_2\rangle$}}
\put(35,-2){\textcolor{gray}{$\langle \alpha_3\rangle$}}
\put(-23,50){\textcolor{gray}{$\langle \alpha_4\rangle$}}
\put(-53,80){\textcolor{gray}{$\langle \alpha_5\rangle$}}
\put(-30,83){\color{gray}\vector(1,0){42}}
\put(117,80){\textcolor{gray}{$\langle \alpha_6\rangle$}}
\put(115,83){\color{gray}\vector(-1,0){40}}
\put(-10,100){\textcolor{red}{$0$}}
\put(95,100){\textcolor{red}{$\alpha_1\alpha_4\alpha_5\alpha_6$}}
\put(95,8){\textcolor{red}{$\alpha_1\alpha_4\alpha_5\alpha_6+\alpha_2\alpha_4\alpha_5\alpha_6$}}
\put(-113,8){\textcolor{red}{$\alpha_1\alpha_4\alpha_5\alpha_6+\alpha_2\alpha_4\alpha_5\alpha_6$}}
\put(-63,-4){\textcolor{red}{$+\alpha_3\alpha_4\alpha_5\alpha_6$}}
\end{picture}  
       \caption{Example of a generalized spline on $K_4$}
       \label{fig:K4 spline}          
\end{figure*}

\begin{figure*}[h]      
   \begin{subfigure}[b]{0.4\textwidth}
   \small
\begin{picture}(100,110)(-50,-10)
\put(0,10){\circle*{5}}
\put(0,100){\circle*{5}}
\put(90,10){\circle*{5}}
\put(90, 100){\circle*{5}}
\put(0,10){\line(0,1){90}}
\linethickness{.18em}
\put(0,100){\line(1,-1){90}}
\put(0,10){\line(1,0){90}}
\put(90,10){\line(0,1){90}}
\put(0,100){\line(1,0){90}}
\put(0,10){\line(1,1){90}}
\put(35,105){\textcolor{gray}{$\langle \alpha_1\rangle$}}
\put(92,50){\textcolor{gray}{$\langle \alpha_2\rangle$}}
\put(35,-2){\textcolor{gray}{$\langle \alpha_3\rangle$}}
\put(-23,50){\textcolor{gray}{$\langle \alpha_4\rangle$}}
\put(3,80){\textcolor{gray}{$\langle \alpha_5\rangle$}}
\put(67,80){\textcolor{gray}{$\langle \alpha_6\rangle$}}
\put(-10,100){\textcolor{red}{$0$}}
\put(95,100){\textcolor{red}{$\alpha_1$}}
\put(95,0){\textcolor{red}{$\alpha_1+\alpha_2$}}
\put(-60,0){\textcolor{red}{$\alpha_1+\alpha_2+\alpha_3$}}
\end{picture}
                \caption{Spline on subgraph $(P_4, \alpha|_{P_4})$}
                \label{fig:P_4}
        \end{subfigure}
          \hspace{6em}
        \begin{subfigure}[b]{0.4\textwidth}
        \small
\begin{picture}(150,110)(-50,-10)
\put(0,10){\circle*{5}}
\put(0,100){\circle*{5}}
\put(90,10){\circle*{5}}
\put(90, 100){\circle*{5}}
\linethickness{.18em}
\put(0,10){\line(0,1){90}}
\put(0,100){\line(1,-1){90}}
\put(0,10){\line(1,0){90}}
\put(90,10){\line(0,1){90}}
\put(0,100){\line(1,0){90}}
\put(0,10){\line(1,1){90}}
\put(35,105){\textcolor{gray}{$\langle \alpha_1\rangle$}}
\put(92,50){\textcolor{gray}{$\langle \alpha_2\rangle$}}
\put(35,-2){\textcolor{gray}{$\langle \alpha_3\rangle$}}
\put(-23,50){\textcolor{gray}{$\langle \alpha_4\rangle$}}
\put(3,80){\textcolor{gray}{$\langle \alpha_5\rangle$}}
\put(67,80){\textcolor{gray}{$\langle \alpha_6\rangle$}}
\put(-10,100){\textcolor{red}{$0$}}
\put(95,100){\textcolor{red}{$\alpha_1\alpha_4$}}
\put(95,0){\textcolor{red}{$(\alpha_1+\alpha_2)\alpha_4$}}
\put(-80,0){\textcolor{red}{$(\alpha_1+\alpha_2+\alpha_3)\alpha_4$}}
\end{picture}
               \caption{Spline on subgraph $(C_4, \alpha|_{C_4})$}
                \label{fig:C_4}
        \end{subfigure} 
       \caption{Nonexamples of  generalized splines on $K_4$}\label{fig:nonsplines}          
\end{figure*}

Our definition of generalized splines allows us to do several things that weren't possible from the algebraic or geometric perspectives:
\begin{itemize}
\item {\em We give a lower bound for one of the central questions of classical splines.}  Corollary \ref{cor: free submodule} proves that every collection of generalized splines over an integral domain has a free-submodule of rank $|V|$, producing a lower bound for the dimension of the ring of splines $R_G$ whenever $R_G$ is a free module over $R$.  This significantly generalizes work of Guillemin and Zara in the GKM context \cite[Theorem 2.1]{GuZ2}. 
\item {\em We streamline earlier combinatorial constructions of splines.}  Our construction isolates and highlights the algebraic structures used in previous work on splines.  In our language, algebraic splines assume that the ideals $\alpha(e)$ are principal and that the generators for the ideals $\alpha(e)$ satisfy some coprimality conditions.  A  classical condition like ``piecewise polynomials meet with order $r$ smoothness at an edge $e$" corresponds to using the edge ideal $\alpha(e)^{r+1}$ instead of $\alpha(e)$.  

From the geometric point of view, we owe much to a series of papers by Guillemin and Zara \cite{GuZ1, GuZ2} whose goal is to construct geometric properties of GKM manifolds from a strictly combinatorial viewpoint.  Yet their combinatorial model imposes more restrictions than the classical definition of splines---conditions  that are natural (and necessary!) for any geometric application.
\item {\em We expand the family of objects on which splines are defined to arbitrary graphs.}  Our work shows that graphs that have no reasonable geometric interpretation nonetheless are central to the analysis of splines.  Theorem \ref{theorem: union of graphs} decomposes the ring of splines for a graph $G$ in terms of the splines for subgraphs of $G$; Corollary \ref{cor: SpanningTrees} specializes Theorem \ref{theorem: union of graphs} to spanning trees, whose splines are completely described in Theorem \ref{Tree}; and Theorem \ref{theorem: Intersection} decomposes the ring of splines for $G$ in terms of a particular collection of subcycles and subtrees of $G$.  Cycles play a similarly key role in Rose's description of the syzygies of spline ideals \cite{R1, R2} (see also Schumaker's work \cite{Schu1}).  Yet neither trees nor cycles are geometrically meaningful from a GKM picture.  (See forthcoming work of Handschy, Melnick, and Reinders \cite{HMR} and of Bowden, Cao, Hagen, King, and Reinders \cite{BHKR} for a deeper investigation of generalized splines on cycles.)
\item {\em We expand the family of rings on which splines are defined.}  This give a convenient language to describe simultaneously the GKM constructions for equivariant cohomology and equivariant K--theory.  
Moreover, generalized splines over integers have interesting connections to elementary number theory \cite{HMR}.
\item {\em We provide the natural language for further generalizations of splines.}   Our construction of generalized splines extends even more: label each vertex of the graph $G$ by a (possibly distinct) $R$-module $M_v$ and label each edge by a module $M_e$ together with homomorphisms $M_v \rightarrow M_e$ for each vertex $v$ incident to the edge $e$.  Geometrically, this corresponds to Braden and MacPherson's construction of equivariant intersection homology \cite{BM}, also used by Fiebig in representation-theoretic contexts \cite{F}.  We discuss this and other open questions in the final section of the paper.
\end{itemize}

The rest of this paper is structured as follows.  Section \ref{section: foundational} establishes  essential results for generalized splines that were first shown in special cases like equivariant cohomology and algebraic splines.  We highlight Theorem \ref{thm: module direct sum} and Corollary \ref{cor: syzgies}, which generalize and strengthen Rose's result that for certain polyhedral complexes, the syzygies $B$ of the spline ideal are a direct summand of the splines $R_G \cong R \oplus B$ \cite{R1}.  Corollary \ref{cor: syzgies} uses this in Rose's special case to show that  the syzygies of the ideal generated by the image of $\alpha$ is isomorphic as a module to the collection of generalized splines whose restriction to a particular fixed point is zero.  This relates the algebraically--natural question of finding syzygies of splines to the question of finding a particular, geometrically--natural kind of basis for the module of splines.  Section \ref{GKM Matrix} describes a tool analogous to the incidence matrix of a graph that we call a {\em GKM matrix}.  Section \ref{sec: Trees} completely characterizes the generalized splines for trees  in terms of a minimal set of free generators for the ring of generalized splines.

One of our central questions is: when does an edge--labeled graph have nontrivial generalized splines? The answer (essentially always, as in Theorem \ref{thm: Complete2}) is actually more refined.  Corollary \ref{cor: free submodule} explicitly constructs a free $R$-submodule of the generalized splines on $G$ of rank $|V|$.  When $R$ is an integral domain and the generalized splines form a free $R$-module (as is the case for GKM theory), we conclude that the rank of the $R$-module of generalized splines is at least $|V|$.  

Section \ref{section: submodules} uses analogues of a shelling order (in combinatorics) or a `flow-up basis' (in geometry) to identify $R$-submodules of the generalized splines.  Section \ref{section: intersection} characterizes generalized splines differently: in terms of the intersections of the generalized splines formed by various subgraphs.  This allows us to reframe the definition of generalized splines as an intersection of very simple graphs (Theorem \ref{theorem: union of graphs}) and to reduce the number of intersections needed by using certain spanning trees (Corollary \ref{cor: SpanningTrees}).  Finally, Theorem \ref{theorem: Intersection} analyzes the GKM matrix directly to decompose the ring of generalized splines on $G$ as an intersection of the generalized splines for particular subcycles of $G$.




\section{Definitions and foundational results}\label{section: foundational}

In this section, we formalize a collection of definitions which were stated implicitly in the introduction.   We then give foundational results describing the structure of the ring of generalized splines, including key methods to construct the ring and to build new generalized splines from existing ones. 

We begin with a quick overview of our notational conventions.

\begin{enumerate}
\item $G$: a graph, defined as a set of vertices $V$ and edges $E$. Assumed throughout to be finite with no multiple edges between vertices.
\item $R$: a commutative ring with identity $1$.
\item $\mathcal{I}$: the set of ideals in $R$.
\item $\alpha$: an edge--labeling function on $G$ that assigns a nonzero element of $\mathcal{I}$ to each edge in $E$. See Definition \ref{Edge Label Defn}.
\item $(G,\alpha)$: an edge--labeled graph.
\item $\alpha(e_{i,j})= \alpha(v_iv_j) = I_{e_{i,j}}$: image of the edge $e_{i,j} = v_iv_j$ under the map $\alpha$.
\item $\alpha_{i,j}$: an arbitrary element of the ideal $\alpha(e_{i,j})$. When $\alpha(e_{i,j})$ is principal, $\alpha_{i,j}$ often denotes the generator.
\item $R_{G}$: the ring of generalized splines on $(G, \alpha)$. See Definition \ref{Gen Spline Defn}.
\item $\spl{p}{}$: a generalized spline. An element of $\bigoplus_{v \in V} R$ denoted $\spl{p}{} = (\spl{p}{v_1}, \spl{p}{v_2}, \ldots, \spl{p}{v_{|V|}})$. See Definition \ref{Gen Spline Defn}.
\item $\spl{p}{v}$: the coordinate of $\spl{p}{}$ corresponding to vertex $v \in V$. An element of $R$.
\item $M_{G}$: the (possibly extended) GKM matrix for the graph $G$. See Definition \ref{GKM Matrix Defn}.
\end{enumerate}

The first definition describes the combinatorial set-up of our work: a graph whose edges are labeled by ideals in a ring $R$.  The ring $R$ is always assumed to be a commutative ring with identity, though in later sections we occasionally add more conditions.

\begin{definition}
\label{Edge Label Defn}
Let $G = (V,E)$ be a graph and let $R$ be a commutative ring with identity.  An {\em edge--labeling function} is a map $\alpha: E \rightarrow \{\textup{ideals }I \subseteq R\}$ from the set of edges of $G$ to the set of nonzero ideals in $R$.  An {\em edge--labeled graph} is a pair $(G, \alpha)$ of a graph $G$ together with an edge--labeling function of $E$.  We often refer to the set of ideals in $R$ as $\mathcal{I}$.
\end{definition}

We now  precisely define the compatibility condition that we use on the edges.

\begin{definition}
\label{GKM Condition Defn}
Let $G = (V,\alpha)$ be an edge--labeled graph.  An element $\spl{p}{} \in \bigoplus_{v \in V} R$ satisfies  {\em the GKM condition} at an edge $e=uv$ if $\spl{p}{u} - \spl{p}{v} \in \alpha(e)$.
\end{definition}

In GKM theory and in the theory of algebraic splines, the ring $R$ is a polynomial ring in $n$ variables.  The ideal $\alpha(e)$ is  the principal ideal generated by a linear form in GKM theory, and by a power of a linear form in the theory of algebraic splines.

We build the ring of generalized splines by imposing the GKM condition at every edge in the graph.

\begin{definition}
\label{Gen Spline Defn}
Let  $(G,\alpha)$ be an edge--labeled graph.  The {\em ring of generalized splines} is
\[R_{G, \alpha} = \left\{ \spl{p}{} \in \bigoplus_{v \in V} R \textup{ such that } \spl{p}{} \textup{ satisfies the GKM condition at each edge } e \in E\right\}.\]
 Each element of $R_{G, \alpha}$ is called a {\em generalized spline}.  When there is no risk of confusion, we write $R_{G}$.
\end{definition}

We first confirm that in fact $R_{G}$ is a ring.

\begin{proposition}\label{ring proof}
$R_{G}$ is a ring with unit $\spl{1}{}$ defined by $\spl{1}{v} = 1$ for each vertex $v \in V$.  
\end{proposition}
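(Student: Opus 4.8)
The plan is to show that $R_G$ is a subring of the product ring $\bigoplus_{v \in V} R$, which carries coordinatewise addition and multiplication. Since $\bigoplus_{v \in V} R$ is already a commutative ring with identity, it suffices to verify that $R_G$ is closed under the ring operations, contains the multiplicative identity $\spl{1}{}$, and is closed under additive inverses. The entire argument rests on a single observation: each edge ideal $\alpha(e)$ is an ideal of $R$, hence is closed under addition, under subtraction, and under multiplication by arbitrary elements of $R$. Every closure property we need follows by checking the GKM condition one edge at a time.

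The key steps I would carry out, in order, are as follows. First, fix an arbitrary edge $e = uv$ and let $\spl{p}{}, \spl{q}{} \in R_G$, so that $\spl{p}{u} - \spl{p}{v} \in \alpha(e)$ and $\spl{q}{u} - \spl{q}{v} \in \alpha(e)$. For closure under addition, observe that
\[
(\spl{p}{u} + \spl{q}{u}) - (\spl{p}{v} + \spl{q}{v}) = (\spl{p}{u} - \spl{p}{v}) + (\spl{q}{u} - \spl{q}{v}) \in \alpha(e),
\]
since $\alpha(e)$ is closed under addition. For closure under multiplication, I would use the identity
\[
\spl{p}{u}\spl{q}{u} - \spl{p}{v}\spl{q}{v} = \spl{p}{u}(\spl{q}{u} - \spl{q}{v}) + \spl{q}{v}(\spl{p}{u} - \spl{p}{v}),
\]
and note that each summand on the right is an element of $R$ times an element of $\alpha(e)$, hence lies in $\alpha(e)$ because $\alpha(e)$ absorbs multiplication by $R$. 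Since $e$ was arbitrary, both $\spl{p}{} + \spl{q}{}$ and $\spl{p}{}\spl{q}{}$ satisfy the GKM condition at every edge and therefore belong to $R_G$.

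Next I would handle the identity and additive inverses. The element $\spl{1}{}$ satisfies $\spl{1}{u} - \spl{1}{v} = 1 - 1 = 0 \in \alpha(e)$ at every edge (every ideal contains $0$), so $\spl{1}{} \in R_G$; and it is the identity because multiplication is coordinatewise. For additive inverses, given $\spl{p}{} \in R_G$ the element $-\spl{p}{}$ satisfies $(-\spl{p}{u}) - (-\spl{p}{v}) = -(\spl{p}{u} - \spl{p}{v}) \in \alpha(e)$ since ideals are closed under negation. Finally, associativity, commutativity, and distributivity are inherited directly from $\bigoplus_{v \in V} R$ and need no separate verification.

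I do not expect any genuine obstacle here: the statement is foundational and every step reduces to the defining closure properties of an ideal. The only point requiring a moment's care is closure under multiplication, since the naive difference $\spl{p}{u}\spl{q}{u} - \spl{p}{v}\spl{q}{v}$ is not manifestly in $\alpha(e)$ until one rewrites it using the ``add and subtract'' identity above to exhibit it as a combination of the two known differences with ring coefficients. Getting that algebraic rearrangement right is the crux, and once it is in place the proposition follows.
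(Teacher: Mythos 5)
Your proof is correct and follows essentially the same route as the paper: both verify that $R_G$ is a subring of $\bigoplus_{v\in V}R$ by checking the GKM condition edge by edge, using the same add-and-subtract identity for products (yours inserts $\spl{p}{u}\spl{q}{v}$ where the paper inserts $\spl{p}{v}\spl{q}{u}$, an immaterial difference). Your extra check of additive inverses is a small completeness bonus the paper leaves implicit.
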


\begin{proof}
By definition $R_{G}$ is a subset of the product ring $\bigoplus_{v \in V} R$ so we need only confirm that the identity is in $R_{G}$ and that $R_{G}$ is closed under addition and multiplication.  The operations are component-wise addition and multiplication since $R_{G}$ is in $\bigoplus_{v \in V} R$.  The identity in $\bigoplus_{v \in V} R$ is the generalized spline $\spl{1}{}$ defined by $\spl{1}{v} = 1$ for each vertex $v \in V$.  This satisfies the GKM condition at each edge because for every edge $e = uv$ we have $\spl{1}{u} - \spl{1}{v} = 0$ and $0$ is in each ideal $\alpha(e)$.  The set $R_{G}$ is closed under addition because if $\spl{p}{}, \spl{q}{} \in R_{G}$ then for each edge $e=uv$ we have
\[\spl{(p+q)}{u} - \spl{(p+q)}{v} = (\spl{p}{u} + \spl{q}{u}) - (\spl{p}{v} + \spl{q}{v}) = (\spl{p}{u} - \spl{p}{v}) + (\spl{q}{u} - \spl{q}{v}) \]
which is in $\alpha(e)$ by the GKM condition. 
Similarly, the set $R_{G}$ is closed under multiplication because if $\spl{p}{}, \spl{q}{} \in R_{G}$ then for each edge $e=uv$ we have
\[\spl{(pq)}{u} - \spl{(pq)}{v} = (\spl{p}{u} \spl{q}{u}) - (\spl{p}{v} \spl{q}{v}) = (\spl{p}{u} \spl{q}{u}- \spl{p}{v}\spl{q}{u}) + (\spl{p}{v}\spl{q}{u} - \spl{p}{v}\spl{q}{v}) = \spl{q}{u}(\spl{p}{u} - \spl{p}{v}) + \spl{p}{v}(\spl{q}{u} - \spl{q}{v}) \]
which is in $\alpha(e)$ by the GKM condition. 
\end{proof}

The generalized splines $R_G$ also form an $R$--module: multiplication by $r$ corresponds to  scaling each polynomial in the spline $\spl{p}{}$ or equivalently to multiplication by $r \spl{1}{}$.  Figure \ref{fig:P_4 Multiple} demonstrates the $R$-module structure of $R_{P_4}$: multiplying $\spl{p}{}$ by an arbitrary element $r \in R$ produces the spline $r\spl{p}{} = (r\spl{p}{v_1}, r\spl{p}{v_2}, r\spl{p}{v_3}) \in R_{P_4}$.

One major question we study in this paper is whether there are {\em nontrivial} generalized splines, which means the following.

\begin{definition}
A {\em nontrivial generalized spline} is an element $\spl{p}{} \in R_{G}$ that is not in the principal ideal $R\spl{1}{}$.
\end{definition}

In other words, we ask whether the $R$-module $R_{G}$ contains at least two linearly independent elements.  We answer this question completely (and more strongly) in Theorem \ref{thm: Complete2} and its Corollary \ref{cor: Proper Subgraph}: yes, except in the trivial cases when $G$ consists of a single point or $R$ is  zero.

If edge--labels were zero then the ring of splines could be trivial for trivial algebraic reasons: for instance, if all edge--labels of $G$ were zero then the only elements of $R_G$ are trivial splines.  This is why $\alpha(e)$ is always nonzero in Definition \ref{Edge Label Defn}. 

\begin{definition}
Let $(G,\alpha)$ and $(G', \alpha')$ be edge--labeled graphs with respect to $R$.  A {\em homomorphism of edge--labeled graphs} $\phi: (G, \alpha) \rightarrow (G', \alpha')$ is a graph homomorphism $\phi_1: G \rightarrow G'$ together with a ring {\em automorphism} $\phi_2: R \rightarrow R$ so that for each edge $e \in E_{G}$ we have $\phi_2(\alpha(e)) = \alpha'(\phi_1(e))$.
\begin{equation}\label{equation: commutative diagram}
 \begin{CD}
E_{G} @>\phi_1 >> E_{G'} \\
@VV\alpha V @VV \alpha' V\\
\mathcal{I} @> \phi_2 >> \mathcal{I}
\end{CD}
\end{equation}
An {\em isomorphism of edge--labeled graphs} is a homomorphism of edge--labeled graphs whose underlying graph homomorphism is in fact an isomorphism.
\end{definition}

We stress that the map $\phi_2$ is a ring {\em automorphism}.  This ensures that $\phi_2$ induces a map on the set of ideals $\phi_2: \mathcal{I} \rightarrow \mathcal{I}$ and that the diagram in Equation \eqref{equation: commutative diagram} is well-defined.  The content of the definition is that the diagram commutes.

Many interesting homomorphisms of edge--labeled graphs arise when $\phi_2: R \rightarrow R$ is the identity homomorphism.  Indeed, when $R$ is the integers, this is essentially the only case.  However, some rings $R$ have very interesting automorphisms: for instance, when $R$ is a polynomial on $n$ variables, the symmetric group on $n$ letters acts on $R$ by permuting variables.  This induces an important action in equivariant cohomology, which is substantively different from a closely related action induced by the identity ring automorphism \cite{T2}. Our first proposition confirms that the ring of generalized splines is invariant under edge--labeled isomorphisms.  More precisely, when two graphs are edge--labeled isomorphic, any generalized spline for one graph will be a generalized spline for the other.  

\begin{proposition}
\label{proposition: isomorphism}
If $\phi: (G, \alpha) \rightarrow (G', \alpha')$ is an isomorphism of edge--labeled graphs then $\phi$ induces an isomorphism of the corresponding rings of generalized splines $\phi_*: R_{G} \cong R_{G'}$ according to the rule that $\phi_*(\spl{p}{})_{\phi_1(u)} = \phi_2(\spl{p}{u})$ for each $u \in V_{G}$.
\end{proposition}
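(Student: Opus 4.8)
The plan is to verify in turn that the prescribed rule defines a function $\phi_*$ landing in $R_{G'}$, that this function is a ring homomorphism, and that it is a bijection by exhibiting an explicit inverse. Since $\phi_1$ is a graph isomorphism it is in particular a bijection on vertices, so the rule $\phi_*(\spl{p}{})_{\phi_1(u)} = \phi_2(\spl{p}{u})$ assigns a value in $R$ to every coordinate of $\phi_*(\spl{p}{})$ (namely the coordinate indexed by the vertex $\phi_1(u) \in V_{G'}$); hence $\phi_*(\spl{p}{})$ is a well-defined element of $\bigoplus_{w \in V_{G'}} R$.

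The crucial step is to check that $\phi_*(\spl{p}{})$ actually lies in $R_{G'}$, i.e. that it satisfies the GKM condition at every edge of $G'$. Here I would use that $\phi_1$ is an isomorphism: an arbitrary edge of $G'$ has the form $e' = \phi_1(e)$ for a unique edge $e = uv$ of $G$, with endpoints $\phi_1(u)$ and $\phi_1(v)$. Then
\[
\phi_*(\spl{p}{})_{\phi_1(u)} - \phi_*(\spl{p}{})_{\phi_1(v)} = \phi_2(\spl{p}{u}) - \phi_2(\spl{p}{v}) = \phi_2(\spl{p}{u} - \spl{p}{v}),
\]
using that $\phi_2$ is additive. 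Since $\spl{p}{} \in R_{G}$ satisfies the GKM condition at $e$, the difference $\spl{p}{u} - \spl{p}{v}$ lies in $\alpha(e)$, and applying the defining compatibility $\phi_2(\alpha(e)) = \alpha'(\phi_1(e)) = \alpha'(e')$ shows this element lies in $\alpha'(e')$. This is exactly where the commutativity of diagram \eqref{equation: commutative diagram} and the hypothesis that $\phi_2$ is a ring \emph{automorphism} (so that it carries the ideal $\alpha(e)$ onto the ideal $\alpha'(e')$) are needed.

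Next I would confirm that $\phi_*$ is a ring homomorphism. Because $\phi_2$ respects addition, multiplication, and the identity of $R$, and because the ring operations on $R_G$ and $R_{G'}$ are computed coordinate-wise (Proposition \ref{ring proof}), the identities $\phi_*(\spl{p}{}+\spl{q}{}) = \phi_*(\spl{p}{}) + \phi_*(\spl{q}{})$, $\phi_*(\spl{p}{}\spl{q}{}) = \phi_*(\spl{p}{})\phi_*(\spl{q}{})$, and $\phi_*(\spl{1}{}) = \spl{1}{}$ follow by evaluating at the coordinate $\phi_1(u)$ and invoking the corresponding property of $\phi_2$. (Note that $\phi_*$ need not be $R$-linear when $\phi_2 \neq \mathrm{id}$; it is a ring isomorphism twisted by $\phi_2$, which is the natural conclusion to state.)

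Finally, for bijectivity I would observe that $\phi^{-1} = (\phi_1^{-1}, \phi_2^{-1})$ is again an isomorphism of edge-labeled graphs: the compatibility $\phi_2^{-1}(\alpha'(e')) = \alpha(\phi_1^{-1}(e'))$ is obtained by applying $\phi_2^{-1}$ to the defining relation and substituting $e = \phi_1^{-1}(e')$. Thus $\phi^{-1}$ induces a map $(\phi^{-1})_* : R_{G'} \to R_G$ by the same recipe, and a direct coordinate check gives $(\phi^{-1})_* \circ \phi_* = \mathrm{id}_{R_G}$ and $\phi_* \circ (\phi^{-1})_* = \mathrm{id}_{R_{G'}}$, so $\phi_*$ is the desired ring isomorphism. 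I expect the main (though mild) obstacle to be the second step: tracking indices carefully enough to see that every edge of $G'$ is hit exactly once by $\phi_1$, and that the compatibility condition delivers membership in the correct edge ideal $\alpha'(e')$. Everything after that is a formal consequence of $\phi_2$ being a ring automorphism.
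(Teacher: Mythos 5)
Your proof is correct and rests on exactly the same key step as the paper's: transporting the GKM condition along each edge using the compatibility $\phi_2(\alpha(e)) = \alpha'(\phi_1(e))$ together with the fact that $\phi_1$ is a bijection on edges. The paper packages this as a single chain of biconditionals showing $\spl{p}{} \in R_{G}$ if and only if $\phi_*(\spl{p}{}) \in R_{G'}$, whereas you additionally spell out the ring-homomorphism verification and the explicit inverse $(\phi^{-1})_*$ --- details the paper leaves implicit but which are worth recording.
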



\begin{proof}
By definition of generalized splines
\[\spl{p}{} \in R_{G} \textup{  if and only if  } \spl{p}{u} - \spl{p}{v} \in \alpha(e)
\textup{  for each edge  }e=uv \textup{ in }E_{G}.\] 
The map $\phi_2: R \rightarrow R$ is an automorphism of rings, so the GKM conditions imply
\begin{equation} \label{equation: first automorphism line}
\spl{p}{} \in R_{G} \textup{  if and only if  } \phi_2(\spl{p}{u}) - \phi_2(\spl{p}{v}) \in \phi_2(\alpha(e))
\textup{  for each edge  }e=uv \textup{ in }E_{G}.
\end{equation}
The map $\phi_1$ is an isomorphism between the underlying graphs $G$ and $G'$, so $e$ is an edge in $G$ if and only if $\phi_1(e)$ is an edge in $G'$. With the fact that $\alpha'(\phi_1(e)) = \phi_2(\alpha(e))$ for each edge $e \in E_{G}$, this means Equation \eqref{equation: first automorphism line} is equivalent to:
\begin{equation}
\label{equation: preserved under isomorphism}
\phi_2(\spl{p}{u}) - \phi_2(\spl{p}{v}) \in \alpha'(\phi_1(e)) \textup{  for each edge  } e' = \phi_1(u)\phi_1(v) \textup{  in  } E_{G'}.
\end{equation}
Equation \eqref{equation: preserved under isomorphism} 
is equivalent to $\phi_*(\spl{p}{}) \in R_{G'}$ so we conclude that $\spl{p}{}$ is a generalized spline in $R_{G}$ if and only if $\phi_*(\spl{p}{})$ is in $R_{G'}$.
\end{proof}

The next proposition verifies that a generalized spline for the pair $(G, \alpha)$ is a generalized spline for every subgraph of $G$.

\begin{proposition}\label{proposition: subgraphs}
Let $(G, \alpha)$ be an edge--labeled graph and $G' = (V^\prime, E^\prime)$ a subgraph of $G$.  Let $(G', \alpha|_{E^\prime})$ be the edge--labeled graph
whose function $\alpha|_{E^\prime}$ denotes the restriction of $\alpha$ to the edge set of $G'$. 
If $\spl{p}{}$ is a generalized spline for $(G, \alpha)$ then $\spl{p}{}|_{V'} \in \bigoplus_{v \in V^\prime}R $ is a generalized spline for $(G', \alpha|_{E'})$.\end{proposition}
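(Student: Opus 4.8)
The plan is to verify directly that $\spl{p}{}|_{V'}$ satisfies the GKM condition at every edge of the subgraph $(G', \alpha|_{E'})$, which is exactly what Definition~\ref{Gen Spline Defn} requires for membership in $R_{G'}$. The key observation is that a subgraph comes equipped with containments $E' \subseteq E$ and $V' \subseteq V$, so the constraints defining $R_{G'}$ form a subcollection of those defining $R_{G}$, and passing from $\spl{p}{}$ to $\spl{p}{}|_{V'}$ merely forgets coordinates that none of those surviving constraints reference.

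Concretely, I would fix an arbitrary edge $e = uv \in E'$. Since $G'$ is a subgraph, both endpoints of each of its edges lie in $V'$, so $u, v \in V'$ and the coordinates $\spl{p}{u}$ and $\spl{p}{v}$ are preserved under restriction to $V'$; that is, $(\spl{p}{}|_{V'})_u = \spl{p}{u}$ and $(\spl{p}{}|_{V'})_v = \spl{p}{v}$. Because $E' \subseteq E$, the edge $e$ is also an edge of $G$, and since $\spl{p}{} \in R_{G}$ the GKM condition gives $\spl{p}{u} - \spl{p}{v} \in \alpha(e)$. Finally, by definition of the restricted labeling, $\alpha|_{E'}(e) = \alpha(e)$, so $(\spl{p}{}|_{V'})_u - (\spl{p}{}|_{V'})_v \in \alpha|_{E'}(e)$. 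As $e$ was arbitrary, $\spl{p}{}|_{V'}$ satisfies the GKM condition at every edge of $G'$ and hence lies in $R_{G'}$.

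There is no substantive obstacle here: the statement is essentially bookkeeping about which vertices and edges survive restriction. The only point that deserves care is confirming that both endpoints of any edge of $G'$ actually belong to $V'$ --- an immediate consequence of the definition of subgraph --- so that the relevant coordinates of $\spl{p}{}$ are not among those discarded when we restrict. Once this is noted, the verification reduces entirely to the two facts $E' \subseteq E$ and $\alpha|_{E'}(e) = \alpha(e)$, and no computation beyond unwinding the definitions is needed.
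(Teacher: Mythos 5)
Your proof is correct and follows essentially the same route as the paper's: fix an edge of $G'$, use $E' \subseteq E$ to invoke the GKM condition for $(G,\alpha)$, and note that the restricted labeling agrees on that edge. The only addition is your explicit remark that both endpoints of an edge of $G'$ lie in $V'$, which the paper leaves implicit.
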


\begin{proof}
Let $G' \subseteq G$ as in the hypothesis, let $\spl{p}{}$ be a generalized spline for $(G,\alpha)$, and consider the subcollection 
$\spl{p}{}|_{V'}$ obtained by restricting $\spl{p}{}$ to the vertex set $V^\prime \subseteq V$ of $G^\prime$.  For any edge $v_iv_j$ in $G'$ the corresponding edge $v_iv_j \in  E$ since $E' \subseteq E$.  This implies that $\spl{p}{v_i} - \spl{p}{v_j} \in \alpha(v_iv_j)$  by the GKM condition for $(G, \alpha)$.  Since the edge--labeling function for $G'$ is the restriction $\alpha|_{E^\prime}$ to the edges in $E^\prime \subseteq E$, we conclude that the GKM condition is satisfied at every edge of $G'$.  It follows that $\spl{p}{}|_{V'}$ is a generalized spline for $(G', \alpha|_{E'})$ as desired.
\end{proof}

\begin{example}
Consider the generalized spline on the bold $P_4$ in Figure \ref{fig:P_4} with edges labeled as in in Figure \ref{fig:K4 spline}. Removing a leaf and its incident edge from $P_4$ gives the subgraph $P_3$ in Figure \ref{fig:P_3}. The generalized spline for $P_4$ still satisfies the GKM condition at every vertex on the subgraph. Thus $\spl{p}{}|_{P_3}$ is a generalized spline for $P_3$. 
\begin{figure}[h] 
   \begin{subfigure}[b]{0.4\textwidth}
   \centering     
   \small
\begin{picture}(100,110)(0,-5)
\put(0,100){\circle*{5}}
\put(90,10){\circle*{5}}
\put(90, 100){\circle*{5}}
\linethickness{.13em}
\put(90,10){\line(0,1){90}}
\put(0,100){\line(1,0){90}}
\put(30,105){\textcolor{gray}{$\langle \alpha_1 \rangle$}}
\put(92,50){\textcolor{gray}{$\langle \alpha_2 \rangle$}}
\put(-10,100){\textcolor{red}{$0$}}
\put(95,100){\textcolor{red}{$\alpha_1$}}
\put(45,8){\textcolor{red}{$\alpha_1+\alpha_2$}}
\end{picture}
                \caption{$\spl{p}{}|_{P_3} \in R_{P_3}$}
                \label{fig:P_3}
        \end{subfigure}
          \hspace{4em}
        \begin{subfigure}[b]{0.4\textwidth}
        \centering
        \small
\begin{picture}(100,110)(0,-5)
\put(0,10){\circle*{5}}
\put(0,100){\circle*{5}}
\put(90,10){\circle*{5}}
\put(90, 100){\circle*{5}}
\linethickness{.13em}
\put(0,10){\line(1,0){90}}
\put(90,10){\line(0,1){90}}
\put(0,100){\line(1,0){90}}
\put(30,105){\textcolor{gray}{$\langle \alpha_1 \rangle$}}
\put(92,50){\textcolor{gray}{$\langle \alpha_2 \rangle$}}
\put(30,-2){\textcolor{gray}{$\langle \alpha_3 \rangle$}}
\put(-10,100){\textcolor{red}{$0$}}
\put(95,100){\textcolor{red}{$r\alpha_1$}}
\put(95,20){\textcolor{red}{$r(\alpha_1+\alpha_2)$}}
\put(-45,20){\textcolor{red}{$r(\alpha_1+\alpha_2+\alpha_3)$}}
\end{picture}
                \caption{$r\spl{p}{} \in R_{P_4}$}
                \label{fig:P_4 Multiple}
\end{subfigure}
\caption{New generalized splines from old}\label{figure: new splines from old}
\end{figure}
\end{example}

The next proposition shows that the special case when one of the edges is associated to the unit ideal $\alpha(e) = R$ is equivalent to a kind of restriction as in Proposition \ref{proposition: subgraphs}.  In this case, the edge $e$ can be erased  without affecting the ring of generalized splines.

\begin{proposition}\label{proposition: edge deletion}
Suppose that the edge--labeled graph $(G, \alpha)$ has an edge $e$ with $\alpha(e)=R$.  Let $G' = (V_{G}, E-\{e\})$ be the graph $G$ with edge $e$ erased, and let $\alpha': E - \{e\} \rightarrow \mathcal{I}$ be the restriction $\alpha' = \alpha|_{E - \{e\}}$.  Then
\[R_{G} = R_{G'}\]
\end{proposition}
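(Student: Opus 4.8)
The plan is to prove the equality $R_G = R_{G'}$ by double inclusion, first observing that because $G$ and $G'$ share the same vertex set $V_G$, both rings are subsets of the \emph{same} ambient product ring $\bigoplus_{v \in V_G} R$. Thus the statement is a genuine set equality inside one fixed module, and the whole argument reduces to comparing which GKM conditions each side imposes. The only structural difference between $(G,\alpha)$ and $(G',\alpha')$ is the single edge $e$, so the entire content of the proof concerns what happens at $e$.

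For the inclusion $R_G \subseteq R_{G'}$, I would invoke Proposition \ref{proposition: subgraphs} directly: $G'$ is a subgraph of $G$ with vertex set $V^\prime = V_G$ and edge-labeling $\alpha' = \alpha|_{E - \{e\}}$, so any $\spl{p}{} \in R_G$ restricts (trivially, since the vertex set is unchanged) to a generalized spline for $(G', \alpha')$. Hence every element of $R_G$ lies in $R_{G'}$.

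For the reverse inclusion $R_{G'} \subseteq R_G$, I would take an arbitrary $\spl{p}{} \in R_{G'}$, which by definition satisfies the GKM condition at every edge in $E - \{e\}$. To conclude $\spl{p}{} \in R_G$ it remains only to verify the GKM condition at $e$ itself. Writing $e = uv$, the condition asks that $\spl{p}{u} - \spl{p}{v} \in \alpha(e)$; but $\alpha(e) = R$ and $\spl{p}{u} - \spl{p}{v} \in R$ automatically, so the condition at $e$ is vacuous and holds for \emph{every} element of $\bigoplus_{v \in V_G} R$. Therefore $\spl{p}{}$ satisfies the GKM condition at all edges of $G$, so $\spl{p}{} \in R_G$.

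Combining the two inclusions gives $R_G = R_{G'}$. I do not expect any real obstacle here; the only point requiring care is the bookkeeping observation that the unit ideal imposes no constraint — formally, that $\alpha(e) = R$ makes the membership test at $e$ trivially satisfied — together with the remark that the shared vertex set lets us compare the two rings as subsets of a single ambient module rather than passing through any nontrivial identification.
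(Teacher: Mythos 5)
Your proof is correct and follows essentially the same route as the paper: the forward inclusion via Proposition \ref{proposition: subgraphs} (noting the vertex sets coincide), and the reverse inclusion by observing that the GKM condition at $e$ is vacuous because $\alpha(e)=R$. No gaps.
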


\begin{proof}
Proposition \ref{proposition: subgraphs} says that every generalized spline of $G$ is a generalized spline of $G'$, since $G'$ is a subgraph of $G$ with the same vertex set whose labeling agrees on shared edges.  Hence $R_{G} \subseteq R_{G'}$.  To prove the converse, suppose $\spl{p}{}$ is a generalized spline for $(G', \alpha')$.  The GKM condition guarantees that $\spl{p}{u} - \spl{p}{v} \in \alpha(uv)$ 
for every edge $uv \in E-\{e\}$.  In addition, if $u_0, v_0$ are the endpoints of the edge $e$, then $\spl{p}{u_0} - \spl{p}{v_0} \in R$ is vacuously true.  Since $\alpha(e)=R$ we conclude that the GKM condition is satisfied for the edge $e$ as well.  So $\spl{p}{} \in R_{G}$ and $R_{G'} = R_{G}$ as desired.
\end{proof}

We may  build generalized splines from disjoint unions of graphs by taking the direct sum of the respective generalized splines.

\begin{proposition}
If $G = G_1 \cup G_2$ is the union of two disjoint graphs then $R_G = R_{G_1} \oplus R_{G_2}$.
\end{proposition}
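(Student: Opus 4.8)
The plan is to exploit the fact that a disjoint union splits both the vertex set and the edge set, so that the defining GKM conditions decouple entirely between the two pieces. Writing $G_1 = (V_1, E_1)$ and $G_2 = (V_2, E_2)$, disjointness gives $V = V_1 \sqcup V_2$ and $E = E_1 \sqcup E_2$, and crucially every edge of $G$ has both of its endpoints in the same piece: there is no edge joining a vertex of $G_1$ to a vertex of $G_2$. Correspondingly the ambient product ring factors as
\[\bigoplus_{v \in V} R = \left(\bigoplus_{v \in V_1} R\right) \oplus \left(\bigoplus_{v \in V_2} R\right),\]
and I would identify a generalized spline $\spl{p}{}$ with the pair $(\spl{p}{}|_{V_1}, \spl{p}{}|_{V_2})$ under this splitting.

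First I would establish the containment $R_G \subseteq R_{G_1} \oplus R_{G_2}$. Since $G_1$ and $G_2$ are subgraphs of $G$ carrying the restricted edge-labelings, Proposition \ref{proposition: subgraphs} shows directly that $\spl{p}{} \in R_G$ implies $\spl{p}{}|_{V_1} \in R_{G_1}$ and $\spl{p}{}|_{V_2} \in R_{G_2}$, so $\spl{p}{}$ lies in the direct sum.

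The reverse containment is where disjointness does the real work. Given $\spl{a}{} \in R_{G_1}$ and $\spl{b}{} \in R_{G_2}$, I would form $\spl{p}{} = (\spl{a}{}, \spl{b}{}) \in \bigoplus_{v \in V} R$ and verify the GKM condition at every edge $e = uv \in E$. Because $E = E_1 \sqcup E_2$, the edge $e$ lies entirely in $G_1$ or entirely in $G_2$; if $e \in E_1$ then $u, v \in V_1$ and $\spl{p}{u} - \spl{p}{v} = \spl{a}{u} - \spl{a}{v} \in \alpha(e)$ because $\spl{a}{}$ is a spline for $G_1$, and the symmetric argument disposes of the case $e \in E_2$. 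Hence $\spl{p}{} \in R_G$. Combining the two containments yields the asserted equality; and since the identification above respects componentwise addition and multiplication, it is in fact an isomorphism of rings, and a fortiori of $R$-modules.

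The verification involves no genuine difficulty: the one fact on which the whole statement rests is that disjointness forbids edges between the two pieces, so the GKM conditions for $G$ are precisely the GKM conditions for $G_1$ together with those for $G_2$, with no coupling between the two vertex sets. Any subtlety would lie only in keeping the identification of the ambient product ring consistent across both containments.
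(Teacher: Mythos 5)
Your proof is correct and takes essentially the same approach as the paper: the paper simply "rearranges the GKM conditions," observing that disjointness of the vertex and edge sets makes the defining conditions decouple into those for $G_1$ and those for $G_2$. Your version spells out the two containments explicitly (one via the restriction-to-subgraphs proposition, one by direct verification), but the underlying idea is identical.
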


\begin{proof}
Rearranging the GKM conditions gives the result:
\[\begin{array}{lcl}
R_G &=& \left\{ \spl{p}{} \in \bigoplus_{v \in V} R \textup{ such that } \spl{p}{} \textup{ satisfies the GKM condition at each edge } e \in E(G) \right\} \\
&=& \left\{\spl{p}{} \in \bigoplus_{v \in V(G_1)} R \textup{ such that } \spl{p}{v}-\spl{p}{u} \in \alpha(uv) \textup{ for all } uv \in E(G_1) \right\} \\
&& \hspace{0.5in} \oplus 
     \left\{\spl{p}{} \in \bigoplus_{v \in V(G_2)} R \textup{ such that } \spl{p}{v}-\spl{p}{u} \in \alpha(uv) \textup{ for all } uv \in E(G_2) \right\} \\
     &=& R_{G_1} \oplus R_{G_2} \end{array}\]
     because the vertex sets of $G_1$ and $G_2$ are disjoint. 
\end{proof}

Another approach to constructing generalized splines is to build them one vertex at a time.  The next result decomposes the $R$-module of generalized splines into a direct sum of the trivial generalized splines and the generalized splines that are zero at a particular vertex.

\begin{theorem}
\label{thm: module direct sum}
Suppose that $G$ is a connected graph with edge--labeling function $\alpha: V \rightarrow \mathcal{I}$.  Fix a vertex $v \in V$.  Then every generalized spline $\spl{p}{} \in R_{G}$ can be written uniquely as $\spl{p}{} = r \spl{1}{} + \spl{p^v}{}$, where $\spl{p^v}{}$ is a generalized spline satisfying $\spl{p^v}{v}=0$ and $r \in R$ satisfies $r = \spl{p}{v}$.  In other words, if $M =  \langle \spl{p}{}: \spl{p}{v}=0 \rangle$ then $R_{G} = R \spl{1}{} \oplus M$ as $R$-modules.
\end{theorem}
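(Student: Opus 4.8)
The plan is to realize the claimed decomposition as the splitting of the short exact sequence induced by ``evaluation at $v$''. First I would record that, since $R_{G}$ is an $R$-module containing $\spl{1}{}$ (Proposition \ref{ring proof} together with the $R$-module structure noted just afterward), the map $\varepsilon \colon R_{G} \to R$ sending $\spl{p}{} \mapsto \spl{p}{v}$ is a homomorphism of $R$-modules: both addition and scalar multiplication in $R_{G}$ are computed coordinatewise, so reading off the $v$-coordinate respects them. The assignment $r \mapsto r\spl{1}{}$ is an $R$-module map $R \to R_{G}$ that is a one-sided inverse of $\varepsilon$, since the $v$-coordinate of $r\spl{1}{}$ is $r\,\spl{1}{v} = r$. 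Hence $\varepsilon$ is a split surjection, and the whole statement is a direct reading of this splitting.

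For existence of the decomposition, given $\spl{p}{} \in R_{G}$ I would set $r = \spl{p}{v} = \varepsilon(\spl{p}{})$ and $\spl{p^v}{} = \spl{p}{} - r\spl{1}{}$. Then $\spl{p^v}{} \in R_{G}$ because $R_{G}$ is closed under the $R$-module operations and $r\spl{1}{} \in R\spl{1}{} \subseteq R_{G}$, while $\spl{p^v}{v} = \spl{p}{v} - r\,\spl{1}{v} = \spl{p}{v} - r = 0$ by construction. This exhibits $\spl{p}{} = r\spl{1}{} + \spl{p^v}{}$ with $\spl{p^v}{}$ vanishing at $v$ and with $r = \spl{p}{v}$ as required.

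For uniqueness I would evaluate any expression $\spl{p}{} = r\spl{1}{} + \spl{q}{}$ with $\spl{q}{v} = 0$ at the vertex $v$: this forces $r = \spl{p}{v}$, so $r$ is determined, and hence so is $\spl{q}{} = \spl{p}{} - r\spl{1}{}$. Equivalently, the submodule $M$ generated by splines vanishing at $v$ is already equal to the \emph{set} of all splines vanishing at $v$, this set being closed under the coordinatewise operations, so $M = \ker \varepsilon$. The section $r \mapsto r\spl{1}{}$ then identifies $R\spl{1}{}$ with a complement of $\ker \varepsilon$: if $r\spl{1}{} \in M$ then evaluating at $v$ gives $r = 0$, so $R\spl{1}{} \cap M = 0$, while existence gives $R_{G} = R\spl{1}{} + M$. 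Therefore $R_{G} = R\spl{1}{} \oplus M$ as $R$-modules.

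There is essentially no obstacle here; the only point requiring a moment's care is the identification of the generated submodule $M = \langle \spl{p}{} : \spl{p}{v} = 0 \rangle$ with $\ker \varepsilon$, which I would handle by observing that the splines vanishing at $v$ form a submodule outright, so passing to the generated submodule adds nothing. I would also note in passing that connectivity of $G$ is not actually used: the argument applies verbatim to any finite edge-labeled graph, the hypothesis being inherited only from the surrounding discussion.
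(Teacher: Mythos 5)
Your proposal is correct and takes essentially the same approach as the paper: both define $r = \spl{p}{v}$ and set $\spl{p^v}{} = \spl{p}{} - r\spl{1}{}$, with uniqueness following by evaluating at $v$. Your framing via the split surjection $\varepsilon$ and your observation that connectivity is not needed are just tidier packaging of the paper's (very brief) argument, not a different route.
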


\begin{proof}
The trivial generalized spline $\spl{1}{}$ is in $R_{G}$ by Proposition \ref{ring proof}.  Let $r \in R$ be the element $r = \spl{p}{v}$.  Then define $\spl{p^v}{}$ to be the generalized spline $\spl{p^v}{} = \spl{p}{} - r \spl{1}{}$.  (There is a unique element in the ring $R_{G}$ that satisfies this equation.)  By construction we have
\[ \spl{p^v}{v} = \spl{p}{v} - r \spl{1}{v} = r - r = 0.\]
This proves the claim.
\end{proof} 

The previous result could lead us to consider $R$-module bases of generalized splines; see the open questions in Section \ref{section: open}.  Instead, we combine it with a result of Rose's to relate the generalized splines that vanish at a particular vertex to the syzygies of the module generated by the edge--ideals.  (Schumaker also implicitly considered syzygies in an earlier work on splines \cite{Schu1}.)

\begin{corollary}
\label{cor: syzgies}
Suppose $G$ is the dual graph of a hereditary polyhedral complex $\Delta$ and that $R$ is the polynomial ring $\mathbb{R}[x_1, x_2, \ldots, x_d]$.  For each edge $e$ in $G$, let $\ell_e$ be an affine form generating the polynomials vanishing on the intersection of faces in $\Delta$ corresponding to $e$.  Define $\alpha$ to be the function $\alpha(e) = \langle \ell_e^{r+1} \rangle$ for each edge $e$ and let
\[B = \left\{ (b_1, \ldots, b_{|E|}) \in R^{|E|}: \textup{ for all cycles $C$ in $G$, the linear combination } \sum_{e \in C} b_e \ell_e^{r+1} = 0  \right\}.\]

Then $M \cong B$ as $R$-modules.
\end{corollary}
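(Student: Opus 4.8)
The plan is to write down an explicit $R$-module map $\Phi\colon M \to B$ and show it is an isomorphism, using nothing more than connectivity of $G$ together with the fact that $R = \mathbb{R}[x_1, \ldots, x_d]$ is an integral domain. First I would fix a reference orientation of every edge, so that each edge $e$ runs from a tail $a$ to a head $b$; this is also the convention under which the sum $\sum_{e \in C} b_e \ell_e^{r+1}$ in the definition of $B$ should be read, namely with a sign $\pm 1$ on each term recording whether $e$ is traversed forwards or backwards as one goes around $C$. Given $\spl{p}{} \in M$ and an edge $e = ab$, the GKM condition says $\spl{p}{a} - \spl{p}{b} \in \langle \ell_e^{r+1} \rangle$, so there is a \emph{unique} $b_e \in R$ with $\spl{p}{a} - \spl{p}{b} = b_e \ell_e^{r+1}$, uniqueness holding because $R$ is a domain and $\ell_e^{r+1} \neq 0$. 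Define $\Phi(\spl{p}{}) = (b_e)_{e \in E}$; this assignment is visibly $R$-linear.

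Next I would check that $\Phi$ lands in $B$ and is injective. For any cycle $C = (a_0, a_1, \ldots, a_m = a_0)$ the differences telescope:
\[ \sum_{i=1}^{m} (\spl{p}{a_{i-1}} - \spl{p}{a_i}) = \spl{p}{a_0} - \spl{p}{a_m} = 0, \]
and rewriting each difference as $b_{e_i}\ell_{e_i}^{r+1}$ (with the traversal signs built in) shows $\sum_{e \in C} b_e \ell_e^{r+1} = 0$, so $\Phi(\spl{p}{}) \in B$. For injectivity, suppose $\Phi(\spl{p}{}) = 0$. Then $b_e = 0$ for every edge, so $\spl{p}{a} = \spl{p}{b}$ across each edge; since $G$ is connected this forces $\spl{p}{}$ to be constant, and $\spl{p}{v} = 0$ (as $\spl{p}{} \in M$) forces $\spl{p}{}$ to be the zero spline.

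Surjectivity is the heart of the argument and is where I expect the real work. Given $(b_e) \in B$, I would reconstruct a spline by a discrete potential: set $\spl{p}{v} = 0$, and for any vertex $u$ choose a path $P$ from $v$ to $u$ and put $\spl{p}{u} = \sum_{e \in P} \pm b_e \ell_e^{r+1}$, with signs from the traversal of $P$. The content is that this is independent of the chosen path: any two $v$-to-$u$ paths differ by a closed walk, hence by a $\mathbb{Z}$-combination of cycles, and the defining condition of $B$ says the signed sum of $b_e \ell_e^{r+1}$ vanishes on every cycle. This is exactly the discrete ``closed implies exact'' statement on a connected graph. Once $\spl{p}{}$ is well defined, one checks directly that for each edge $e = ab$ the difference $\spl{p}{a} - \spl{p}{b}$ equals $b_e\ell_e^{r+1} \in \alpha(e)$, so $\spl{p}{}$ is a generalized spline; it vanishes at $v$ by construction, hence lies in $M$, and $\Phi(\spl{p}{}) = (b_e)$. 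Together with the previous paragraph this makes $\Phi$ an $R$-module isomorphism.

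The main obstacle is the well-definedness of the potential in the surjectivity step: making the cycle bookkeeping precise (reducing arbitrary closed walks to a cycle basis, and keeping the traversal signs consistent) is the only place where the hypotheses genuinely enter, through connectivity of $G$ and the domain property of $R$. I would note that this argument both recovers Rose's decomposition $R_G \cong R \oplus B$ (combining it with Theorem \ref{thm: module direct sum}, which identifies $M$ with the complement of $R\spl{1}{}$ in $R_G$) and makes transparent that the hereditary hypothesis on $\Delta$ is needed only to guarantee that $B$ as defined coincides with the honest syzygy module of the generators $\ell_e^{r+1}$, rather than for the isomorphism $M \cong B$ itself.
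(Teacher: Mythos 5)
Your proof is correct, but it takes a genuinely different route from the paper. The paper's proof is two sentences: it cites Rose's theorem that $R_G \cong R \oplus B$ under exactly these hypotheses and combines it with Theorem \ref{thm: module direct sum}, which gives $R_G = R\spl{1}{} \oplus M$. You instead construct the isomorphism $\Phi\colon M \to B$ explicitly, which amounts to reproving Rose's decomposition from scratch: your $\Phi$ together with the trivial splines realizes $R_G \cong R \oplus B$ directly. What your approach buys is self-containedness and transparency --- in particular, it sidesteps the implicit cancellation step in the paper's argument (passing from $R \oplus M \cong R_G \cong R \oplus B$ to $M \cong B$ is not automatic for arbitrary modules and really relies on both isomorphisms being compatible with the evaluation-at-$v$ splitting, which your explicit map makes manifest). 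What the paper's approach buys is brevity and a clean connection to the existing literature. Two small points to tighten: the sign convention you impose on $\sum_{e \in C} b_e \ell_e^{r+1}$ is indeed the right reading of $B$ (it is Rose's syzygy module for an oriented dual graph), but it should be stated as part of the setup rather than inferred; and the connectivity of $G$ that you invoke for injectivity and for the potential construction is exactly what the hereditary hypothesis on $\Delta$ supplies, so that hypothesis is doing slightly more work in your argument than your closing remark suggests.
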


\begin{proof}
Under these conditions, Rose proved that $R_G \cong R \oplus B$ as $R$-modules \cite[Theorem 2.2]{R1}.  From the previous claim, we conclude $M \cong B$ as desired.
\end{proof}

We close this section by describing the relationship between the ring of generalized splines associated to an edge--labeling $\alpha$ and the ring of generalized splines associated to the edge--labeling $r\alpha$ obtained by scaling.

\begin{theorem}
Suppose that $(G, \alpha)$ is a connected edge--labeled graph.  Fix an element $r \in R$ and define the edge--labeling function $r\alpha: E \rightarrow \mathcal{I}$  by $r\alpha(e) = rI_e$ for each edge $e \in E$.  Choose a vertex $v_0 \in V$ and define $M =  \langle \spl{p}{}: \spl{p}{v_0}=0 \rangle$.  

If $R$ is an integral domain then
\[R_{G, r\alpha} = R \spl{1}{} \oplus rM.\]
\end{theorem}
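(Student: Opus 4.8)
The plan is to reduce the whole statement to the single identification $M' = rM$, where $M' = \langle \spl{q}{} \in R_{G,r\alpha} : \spl{q}{v_0} = 0 \rangle$ denotes the module of generalized splines for the scaled labeling $(G,r\alpha)$ that vanish at $v_0$. Since $G$ is connected, so is the edge-labeled graph $(G,r\alpha)$, and Theorem \ref{thm: module direct sum} applied to $(G,r\alpha)$ at the vertex $v_0$ already supplies the internal direct sum $R_{G,r\alpha} = R\spl{1}{} \oplus M'$. Thus once $M' = rM$ is established the theorem follows immediately, and the direct-sum structure on $R\spl{1}{}\oplus rM$ is inherited for free. I would dispose of the degenerate case $r=0$ at the outset: then $r\alpha(e) = \{0\}$ for every edge, which forces every spline for $(G,r\alpha)$ to be constant on the connected graph $G$, so $R_{G,r\alpha} = R\spl{1}{}$; meanwhile $rM = \{0\}$, and both sides equal $R\spl{1}{}$. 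Hence I may assume $r \neq 0$, and in an integral domain this makes $r$ a nonzerodivisor, so cancellation is available.

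For the inclusion $rM \subseteq M'$ I would argue directly, without using that $R$ is a domain. Given $\spl{p}{} \in M$, the $v_0$-coordinate of $r\spl{p}{}$ is $r\spl{p}{v_0} = r\cdot 0 = 0$, and for each edge $e=uv$ the difference of coordinates is $r\spl{p}{u} - r\spl{p}{v} = r(\spl{p}{u}-\spl{p}{v})$, which lies in $rI_e = r\alpha(e)$ because $\spl{p}{u}-\spl{p}{v} \in I_e = \alpha(e)$. So $r\spl{p}{}$ satisfies the GKM condition for $(G,r\alpha)$ and vanishes at $v_0$, giving $r\spl{p}{} \in M'$.

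The substantive direction is $M' \subseteq rM$, and here both connectedness and the integral-domain hypothesis enter. Let $\spl{q}{} \in M'$. First I would show every coordinate of $\spl{q}{}$ is divisible by $r$: for a vertex $w$, choose a path $v_0 = w_0, w_1, \ldots, w_k = w$ (possible since $G$ is connected) and telescope, $\spl{q}{w} = \spl{q}{v_0} + \sum_{i=1}^{k} (\spl{q}{w_i} - \spl{q}{w_{i-1}})$; each summand lies in $r\alpha(w_{i-1}w_i) \subseteq rR$ and $\spl{q}{v_0}=0$, so $\spl{q}{w} \in rR$. Because $r\neq 0$ and $R$ is a domain, there is a unique $\spl{p}{w} \in R$ with $r\spl{p}{w} = \spl{q}{w}$; these coordinates define a candidate spline $\spl{p}{}$, and it remains to verify $\spl{p}{} \in M$. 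Cancelling $r$ in $r\spl{p}{v_0} = \spl{q}{v_0} = 0$ gives $\spl{p}{v_0}=0$. For the GKM condition, fix an edge $e=uv$; then $r(\spl{p}{u}-\spl{p}{v}) = \spl{q}{u}-\spl{q}{v} = ra$ for some $a \in \alpha(e)$, and cancelling $r$ yields $\spl{p}{u}-\spl{p}{v} = a \in \alpha(e)$. Hence $\spl{p}{} \in R_{G,\alpha}$ with $\spl{p}{v_0}=0$, so $\spl{p}{} \in M$ and $\spl{q}{} = r\spl{p}{} \in rM$.

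The main obstacle is precisely this last inclusion. The delicate steps are establishing that each coordinate of $\spl{q}{}$ is genuinely divisible by $r$ inside $R$ — which is exactly where connectedness is indispensable, since it lets me realize every coordinate as a telescoping sum of edge-differences anchored at the vanishing coordinate $\spl{q}{v_0}=0$ — and then transferring the GKM condition back down to $\spl{p}{}$ by cancelling $r$, which is the one place the integral-domain hypothesis is genuinely used. Everything else is bookkeeping that rides on Theorem \ref{thm: module direct sum}.
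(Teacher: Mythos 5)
Your proof is correct and follows essentially the same route as the paper's: reduce to the splines vanishing at $v_0$ via Theorem \ref{thm: module direct sum}, use connectedness to show each coordinate lies in $rR$ (your telescoping sum is the paper's induction on path length), and use the integral-domain hypothesis to cancel $r$ when transferring the GKM condition back to $\alpha$. Your explicit disposal of the case $r=0$ is a small improvement, since the cancellation step $r(x-y)=0 \Rightarrow x=y$ genuinely requires $r\neq 0$, a point the paper's proof leaves implicit.
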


\begin{proof}
Theorem \ref{thm: module direct sum} showed that $R_{G, \alpha} = R \spl{1}{} \oplus M$.  The multiple $rR_{G, \alpha} \subseteq R_{G, r \alpha}$ by definition, so $rM \subseteq R_{G, r \alpha}$.  We also know the intersection $rM \cap R \spl{1}{}$ is zero since the only element of $R\spl{1}{}$ whose restriction to $v_0$ vanishes is the zero spline.  So $R_{G, r\alpha} \supseteq R \spl{1}{} \oplus rM$.

We now prove the opposite containment.  Suppose $\spl{p'}{} \in R_{G, r\alpha}$ and let $\spl{p}{} = \spl{p'}{} - \spl{p'}{v_0} \spl{1}{}$.  (Note that $\spl{p}{}$ satisfies the GKM condition for $(G, r\alpha)$ at each edge.)  We will prove that $\spl{p}{} \in rM$.  We split the argument into two pieces: showing that $\spl{p}{}$ is divisible by $r$ {\em at each vertex}, and then showing that $\spl{p}{}$ satisfies the GKM conditions of $rM$.

To begin, we prove by induction that if $v_k$ is connected to $v_0$ by a path of length $k$ then $\spl{p}{v_k} \in rR$ is in the principal ideal generated by $r$.  The unique path of length zero is our base case, and the element $\spl{p}{v_0}=0 \in rR$ by construction.  
Suppose the claim is true for paths of length $k-1$ and let $v_k$ be a vertex connected to $v_0$ by a path of length $k$.  Then $v_k$ is adjacent to a vertex $v_{k-1}$ which is connected to $v_0$ by a path of length $k-1$.  We know $\spl{p}{v_{k-1}} \in rR$ by the inductive hypothesis, and $\spl{p}{v_k} - \spl{p}{v_{k-1}} \in r I_{e_k}$ for the edge $e_k = v_{k-1}v_k$ by the GKM condition.  The sum $r I_{e_k}+rR \subseteq rR$ since ideals are closed under addition, so $\spl{p}{v_k} \in rR$ as desired.  By induction and because $G$ is connected, we conclude that $\spl{p}{v} \in rR$ for all $v \in V$.  

We just showed that each ring element $\spl{p}{}$ is divisible by $r$.  For each vertex $v$, let $\spl{q}{v}$ be the ring element with $\spl{p}{v} = r\spl{q}{v}$ and collect the $\spl{q}{v}$ into the element $\spl{q}{} \in R^{|V|}$.  We ask whether $\spl{q}{} \in M$.  To answer this, we need to know that for each edge $e=uv$ we have $\spl{q}{u} - \spl{q}{v} \in I_e$.  We know that $\spl{p}{u} - \spl{p}{v} \in r I_e$ by the GKM condition.  Let $x = \spl{q}{u} - \spl{q}{v} \in R$ to isolate the underlying algebraic question: if $rx \in r I_e$ then is $x \in I_e$?  The answer is yes when $R$ is an integral domain: if $rx \in r I_e$ then we can find $y \in I_e$ with $rx=ry$.  Hence $r(x-y)=0$, which implies $x=y$ as long as $R$ is an integral domain.  This proves the claim.
\end{proof}

\section{The GKM  matrix}
\label{GKM Matrix}

The results in the previous section allow us to build new generalized splines from existing ones.  To construct generalized splines from scratch we need a systematic method for recording and analyzing GKM conditions. We do this by representing GKM conditions in matrix form. This section shows how to construct GKM matrices and gives several examples.

Our definition of the GKM matrix assumes that the graph $G$ is directed.  Remark \ref{remark: orientations are irrelevant} shows that changing the directions on the edges of $G$ does not affect the solution space of the matrix, so we generally omit orientations from our figures and our discussion.

\begin{definition}
\label{GKM Matrix Defn}
The {\em GKM matrix} of the directed, edge--labeled graph $(G,\alpha)$ is an $|E| \times |V|$ matrix constructed so that the row corresponding to each directed edge $e=uv \in E$ has
\begin{itemize}
\item $1$ in the column corresponding to $u$,
\item $-1$ in the column corresponding to $v$, and
\item zero otherwise.
\end{itemize}
An {\em extended GKM matrix} of the pair $(G,\alpha)$ is an $|E| \times (|V|+1)$ matrix whose first $|V|$ columns are the GKM matrix, and whose last entry in the row corresponding to edge $e$ is any element $\alpha_e \in \alpha(e)$. 

 When there is no risk of confusion, we refer to an extended GKM matrix as simply the GKM matrix.\end{definition}

For instance, if $\alpha(e) = \langle \alpha_{e_1}, \ldots, \alpha_{e_m}\rangle$ is finitely generated, we could write the last entry in the row corresponding to $e$ as $q_{e_1}\alpha_{e_1} + \ldots + q_{e_m}\alpha_{e_m}$ for arbitrary $q_{e_i} \in R$. In particular, if the ideal $\alpha(e)$ is principal and $\alpha(e) = \langle \alpha_e \rangle$ then we typically write the last column of the extended GKM matrix as the vector $(q_e \alpha_e)_{e \in E}$ for arbitrary coefficients $q_e \in R$. 

\begin{remark} \label{remark: syzygies}
Using this language, we can reframe the syzygy module of spline ideals that Rose defined and that we saw in Corollary \ref{cor: syzgies}.  (See also Schumaker's work \cite{Schu1}.)  In our context, the syzygy module is essentially the collection of elements $q_e \in \alpha(e)$ from the edge-ideals so that $\sum_{e \in C}q_e = 0$ for each cycle $C$ in $G$.  In other words, it describes a collection of elements $q_e \in \alpha(e)$ for which the extended GKM matrix represents a homogeneous system of equations.  This condition appears naturally as we analyze the ring $R_G$ further in Theorem \ref{theorem: Intersection}.
\end{remark}

Generally we consider $q_e$ to be a parameter that takes values in $R$, as in the following proposition, which follows immediately from the construction of the GKM matrix.

\begin{proposition}
\label{prop: solution set to GKM}
Let $M_{G}$ denote the GKM matrix of $(G, \alpha)$. Then the spline $\spl{p}{} \in R^{|V|}$ is a generalized spline for $(G, \alpha)$ if and only if there is an extended GKM matrix $[M_{G}|\spl{v}{}]$ for which $\spl{p}{}$ is a solution.
\end{proposition}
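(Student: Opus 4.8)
The plan is to unwind the definitions on both sides and show the two conditions coincide row by row. Fix the GKM matrix $M_{G}$ of $(G,\alpha)$, which has a row for each directed edge $e = uv$ with $1$ in the column for $u$, $-1$ in the column for $v$, and zeros elsewhere. Given any column vector $\spl{v}{} \in R^{|E|}$, form the extended matrix $[M_{G}|\spl{v}{}]$ and consider what it means for $\spl{p}{} \in R^{|V|}$ to be a solution of the associated system: reading off the row corresponding to $e = uv$, the equation is precisely $\spl{p}{u} - \spl{p}{v} = \spl{v}{e}$, where $\spl{v}{e}$ is the entry of $\spl{v}{}$ in that row.

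First I would prove the forward direction. Suppose $\spl{p}{}$ is a generalized spline for $(G,\alpha)$. By Definition \ref{Gen Spline Defn}, for each edge $e = uv$ the difference $\spl{p}{u} - \spl{p}{v}$ lies in $\alpha(e)$. I would then simply define the last column $\spl{v}{}$ by setting its entry in the row for $e$ to be $\alpha_e := \spl{p}{u} - \spl{p}{v}$. Since this element is by hypothesis in $\alpha(e)$, the vector $\spl{v}{}$ is a legitimate choice for the last column of an extended GKM matrix in the sense of Definition \ref{GKM Matrix Defn}, and by construction $\spl{p}{}$ solves the row equation $\spl{p}{u} - \spl{p}{v} = \alpha_e$ for every edge. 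Hence $\spl{p}{}$ is a solution of $[M_{G}|\spl{v}{}]$.

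Conversely, suppose there exists an extended GKM matrix $[M_{G}|\spl{v}{}]$ for which $\spl{p}{}$ is a solution. For each edge $e = uv$ the corresponding row equation forces $\spl{p}{u} - \spl{p}{v} = \spl{v}{e}$. But by the definition of an extended GKM matrix, the entry $\spl{v}{e}$ is an element $\alpha_e \in \alpha(e)$. Therefore $\spl{p}{u} - \spl{p}{v} \in \alpha(e)$ for every edge $e$, which is exactly the GKM condition of Definition \ref{GKM Condition Defn}. Thus $\spl{p}{}$ satisfies the GKM condition at each edge and so $\spl{p}{} \in R_{G}$.

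I do not anticipate a genuine obstacle here, since the statement is essentially a translation of the defining condition into matrix language; the result is flagged in the excerpt as following immediately from the construction. The only point requiring a little care is that an extended GKM matrix is not a single matrix but a family, parametrized by the allowed choices of last column with entries $\alpha_e \in \alpha(e)$. The forward direction must therefore produce a suitable last column rather than work with a fixed one, and the converse must use that any admissible last column has entries drawn from the edge ideals. Keeping track of this existential quantifier over choices of $\spl{v}{}$ is the subtle bookkeeping step, but it introduces no real difficulty.
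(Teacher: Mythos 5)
Your proof is correct and follows essentially the same route as the paper's: both simply unwind the definitions to observe that the row equation for an edge $e=uv$ reads $\spl{p}{u}-\spl{p}{v}=\alpha_e$ with $\alpha_e\in\alpha(e)$, so the existence of an admissible last column is equivalent to the GKM condition at every edge. The paper states this more tersely; your version just makes the two directions and the quantifier over choices of last column explicit.
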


\begin{proof}
The matrix $M_{G}$  is constructed to record the GKM condition at every edge $e_{i,j} \in E(G)$.   Hence a spline $\spl{p}{} = \left[ \begin{array}{c} \spl{p}{v_1} \\ \vdots \\ \spl{p}{v_{|V|}} \end{array} \right]  \in R^{|V|}$ is a generalized spline for $(G, \alpha)$  if and only if $M_{G}\spl{p}{} = \spl{v}{}$ for some vector $\spl{v}{}= (\alpha_e)_{e \in E}$.  This is equivalent to saying the spline $\spl{p}{}$ is a solution to the system $[M_{G}|\spl{v}{}]$ for some extended GKM matrix, as claimed.
\end{proof}

We can now manipulate $M_{G}$ to obtain systems of equations that are equivalent to the original GKM conditions on $G$.  We state the following corollary simply to stress this fundamental linear algebra property.  

\begin{corollary}\label{corollary: solution set is gen splines}
If $[M'|\spl{v'}{}]$ is obtained from $[M|\spl{v}{}]$ by a series of reversible row or column operations, then the solution set in $R^{|V|}$ to $[M'|\spl{v'}{}]$ is the same as that of $[M|\spl{v}{}]$.
\end{corollary}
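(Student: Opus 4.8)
The plan is to reduce to a single operation and compose. Since ``a series of operations'' is a composition and ``has the same solution set as'' is a transitive relation, it suffices to prove that each single reversible row operation and each single reversible column operation behaves as claimed; the general statement then follows by induction on the length of the series. Throughout I would keep in mind the description from Proposition \ref{prop: solution set to GKM} that the solution set of $[M \mid \spl{v}{}]$ is exactly $\{\spl{p}{} \in R^{|V|} : M\spl{p}{} = \spl{v}{}\}$, and I would be careful that over a commutative ring the word \emph{reversible} is restrictive: swapping two rows (or columns) and adding an arbitrary $R$-multiple of one row (or column) to another are always reversible, but scaling is reversible only by a unit of $R$. Packaging these as matrices, a reversible row operation is left multiplication by some $U \in \mathrm{GL}_{|E|}(R)$ and a reversible column operation is right multiplication by some $C \in \mathrm{GL}_{|V|}(R)$.

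First I would dispatch the row operations, where the conclusion holds verbatim. Such an operation sends $[M \mid \spl{v}{}]$ to $[UM \mid U\spl{v}{}]$ with $U$ invertible over $R$. If $M\spl{p}{} = \spl{v}{}$ then multiplying on the left by $U$ gives $UM\spl{p}{} = U\spl{v}{}$; conversely, if $UM\spl{p}{} = U\spl{v}{}$ then multiplying by $U^{-1}$ recovers $M\spl{p}{} = \spl{v}{}$. Hence the two systems have identical solution sets in $R^{|V|}$. This is the heart of the corollary and it is essentially immediate once $U$ is known to be invertible.

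The genuinely subtle case---and the step I expect to be the main obstacle---is the column operations, because a column operation reindexes the unknowns. Right multiplication $M \mapsto MC$ produces the system $(MC)\spl{q}{} = \spl{v}{}$, and the substitution $\spl{p}{} = C\spl{q}{}$ sets up a bijection $\spl{q}{} \mapsto C\spl{q}{}$ between the solution set of $[MC \mid \spl{v}{}]$ and that of $[M \mid \spl{v}{}]$. The right way to phrase the result is therefore that row operations preserve the solution set on the nose, while reversible column operations carry it across by the invertible change of variables $C$; in both cases reversibility supplies a bijection of solution sets, which is exactly what is needed to move between equivalent GKM systems when analyzing $R_G$ (as in Theorem \ref{theorem: Intersection}). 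The only real work beyond this is bookkeeping: confirming that the elementary matrices one admits as operations actually lie in $\mathrm{GL}_{|E|}(R)$ or $\mathrm{GL}_{|V|}(R)$, so that the inverse operation exists over $R$ and the two-sided argument closes.
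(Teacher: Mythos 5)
Your argument is correct and is precisely the justification the paper intends: the paper states this corollary without proof, calling it a ``fundamental linear algebra property,'' and the remark immediately following it identifies reversible operations with invertible matrices over $R$ --- exactly your packaging of row operations as left multiplication by $U \in \mathrm{GL}_{|E|}(R)$ and column operations as right multiplication by $C \in \mathrm{GL}_{|V|}(R)$. Your caveat about column operations is well taken and is the one point where you sharpen the paper's phrasing: right multiplication by $C$ only gives a bijection $\spl{q}{} \mapsto C\spl{q}{}$ between the solution set of $[MC \mid \spl{v}{}]$ and that of $[M \mid \spl{v}{}]$, not literal equality, so the statement as written is exactly true only for row operations. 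In practice this does not affect the paper, since every subsequent use of the corollary (the row-reductions of the GKM matrices for $P_n$ and $K_4$, and the reduction in Theorem \ref{theorem: Intersection}) employs only row operations.
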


Reversible operations correspond to invertible matrices in $GL_{|V|}(R)$.  For instance, multiplying a row by $x$ is not reversible for the ring $R = \mathbb{C}[x]$ since $1/x$ is not in $R$.  However, multiplying a row by $x$ is reversible when $R = \mathbb{C}(x)$.

\begin{remark}\label{remark: orientations are irrelevant}
Changing the direction of a given edge in $G$ amounts to multiplying the corresponding row in $M_G$ by $-1$, a reversible operation. Hence while the definition of the GKM matrix for the pair $(G, \alpha)$ requires a directed graph, the actual direction chosen is irrelevant to the solution set given by Proposition \ref{prop: solution set to GKM}.
\end{remark}

\begin{example}\label{P3 Example}
We start with the path $P_3$ from Figure \ref{fig:P_3}.  Its extended GKM matrix is
\begin{align*}
M_{P_3}& = \left(\left[ \begin{array}{rrr|l} 1 & -1 &0& q_1\alpha_1 \\ 0 & 1 & -1 & q_2\alpha_2  \end{array} \right]\right) \end{align*}
whose rows may be added to obtain the equivalent system
\[\left[ \begin{array}{rrr|l} 1 & 0 &-1 & q_2\alpha_2 + q_1\alpha_1 \\ 0 & 1 & -1 & q_2\alpha_2  \end{array} \right] \]
If $\spl{p}{} = (\spl{p}{v_1}, \spl{p}{v_2}, \spl{p}{v_3}) \in R_{P_3}$ then the system has dependent variables $\spl{p}{v_1} $ and $\spl{p}{v_2} $ and independent variable $\spl{p}{v_3}$.  All solutions may be written in the form
\begin{align*}
\spl{p}{v_1} &= \spl{p}{v_3} + q_2\alpha_2 + q_1\alpha_1 \\
\spl{p}{v_2} & = \spl{p}{v_3} + q_2\alpha_2
\end{align*}
where $\spl{p}{v_3}$, $q_1$, and $q_2$ are freely chosen elements of $R$.

Setting $\spl{p}{v_3}=0, q_1=1$ and $q_2=1$ yields the generalized spline in Figure \ref{fig:P_3}.

\end{example}

The following generalization will be a central part of our proof of Theorem \ref{thm: Cycle1}.

\begin{example}
Consider the path $P_n$ on $n$ vertices:
\begin{center}
			\begin{picture}(30,30)
				\put(-80,20){\circle*{5}}
				\put(-30,20){\circle*{5}}
				\put(-10,20){\circle*{2}}
				\put(-5,20){\circle*{2}}
				\put(0,20){\circle*{2}}
				\put(20,20){\circle*{5}}
				\put(70,20){\circle*{5}}
				
				\put(-80,20){\line(1,0){50}}
				\put(-30,20){\line(1,0){10}}
				\put(20,20){\line(-1,0){10}}
				\put(20,20){\line(1,0){50}}
				
				\put(-125,18){$P_{n} =$}
				\put(-83,5){$v_1$}
				\put(-35,5){$v_2$}
				\put(10,5){$v_{n-1}$}
				\put(65,5){$v_{n}$}		
			
				\put(-70,25){\textcolor{gray}{$\alpha(e_{1,2})$}}			
				\put(20,25){\textcolor{gray}{$\alpha(e_{n-1,n})$}}
			\end{picture}	
			\end{center}
The GKM matrix for this path is
\[  \left[\begin{array}{rrrrrrr|l}
  1& -1 & 0 & 0 & \cdots & 0 & 0 &  \alpha_{1,2}\\
  0 & 1 & -1 & 0 & \cdots & 0 & 0 & \alpha_{2,3} \\
  0 & 0& 1 & -1 & \cdots & 0 & 0 & \alpha_{3,4}\\
 &&&&&&&\\
  \vdots  & \vdots  & \vdots & \vdots & \ddots  & \vdots & \vdots & \vdots\\
  &&&&&&&\\
0 & 0 & 0 & 0 & \cdots & 1 & -1 & \alpha_{n-1,n}\end{array}\right]
\]
where $\alpha_{i,i+1} \in \alpha(e_{i,i+1})$ are arbitrarily chosen. As before,  we can row-reduce the GKM matrix  by setting row $i$ to be the sum $\sum_{k=i}^{n} (\text{row }k)$ for each $1 \leq i \leq n$.  We obtain an equivalent system of rank $n-1$ in which $\spl{p}{v_n}$ is the only free variable in the set $\{\spl{p}{v_i} : i=1,\ldots,n\}$.	(This system is of maximal rank since an $(n-1) \times (n+1)$ system of equations can have at most one free variable among the $\spl{p}{v_i}$.) Figure \ref{fig:Path GKM} shows this equivalent system:
\begin{figure}[ht]
$\left[\begin{array}{rrrrrrr|l}
  1& 0 & 0 & 0 & \cdots & 0 & -1 &  \alpha_{n-1,n} + \ldots + \alpha_{3,4}+\alpha_{2,3} +\alpha_{1,2} \\
  0 & 1 & 0 & 0 & \cdots & 0 & -1 & \alpha_{n-1,n} + \ldots + \alpha_{3,4}+\alpha_{2,3} \\
  0 & 0& 1 & 0 & \cdots & 0 & -1 & \alpha_{n-1,n}+ \ldots + \alpha_{3,4}\\
 &&&&&&&\\
  \vdots  & \vdots  & \vdots & \vdots & \ddots  & \vdots & \vdots & \vdots\\
  &&&&&&&\\
0 & 0 & 0 & 0 & \cdots & 1 & -1 & \alpha_{n-1,n}\end{array}\right]$
 \caption{A system equivalent to the GKM matrix for $P_{n}$}
   \label{fig:Path GKM}
\end{figure}
\end{example}

The linear combinations that occur in the last column of the matrix in Figure \ref{fig:Path GKM} can be used to construct generalized splines for more complicated graphs as well.  For instance, the next result builds on this description of paths to describe a collection of (usually) nontrivial generalized splines for the cycle $C_n$.  

\begin{theorem}
\label{thm: Cycle1}
Let $C_n$ be a finite edge--labeled cycle given by vertices $v_1, v_2, \ldots, v_n$ in order. Define the vector $\spl{p}{} \in R^{|V|}$ with  
\begin{equation}
\label{cycleEq}
\left[\begin{array}{c}\spl{p}{v_1} \\\spl{p}{v_2} \\\spl{p}{v_3} \\ \vdots  \\\spl{p}{v_{n-1}} \\\spl{p}{v_n}\end{array}\right] = \spl{p}{v_1}\left[\begin{array}{c}1 \\1\\1 \\\vdots \\1 \\1\end{array}\right]+\alpha_{1,n}\begin{bmatrix}
  0 & 0  & \cdots & 0 & 0\\
  1& 0 & \cdots &0 &0 \\
  1 & 1 & \cdots & 0 & 0  \\
  \vdots  & \vdots  & \ddots & \vdots & \vdots  \\ 
  1 & 1 & \cdots  & 1 & 0\\
  1 & 1 & \cdots & 1 & 1
 \end{bmatrix}\left[\begin{array}{c}\alpha_{1,2}\\\alpha_{2,3} \\\alpha_{3,4} \\ \vdots \\ \alpha_{n-2,n-1}\\ \alpha_{n-1,n}\end{array}\right]
\end{equation}
with arbitrary choices of $\spl{p}{v_1} \in R$, $\spl{\alpha}{i,i+1} \in \alpha(e_{i,i+1})$, and $\spl{\alpha}{1, n} \in \alpha(e_{1,n})$.  Then $\spl{p}{}$ is a generalized spline for $C_n$.  The spline $\spl{p}{}$ is nontrivial exactly when $\alpha_{1,n}$ and at least one of the $\alpha_{i,i+1}$ are nonzero.
\end{theorem}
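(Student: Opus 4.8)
The plan is to read off a closed form for each coordinate of $\spl{p}{}$ from Equation \eqref{cycleEq}, verify the GKM condition separately on the two kinds of edges of $C_n$, and then settle the nontriviality dichotomy. Since the $k$-th row of the strictly lower-triangular matrix in \eqref{cycleEq} consists of $k-1$ ones followed by zeros, the matrix product unwinds to
\[
\spl{p}{v_k} = \spl{p}{v_1} + \alpha_{1,n}\sum_{i=1}^{k-1}\alpha_{i,i+1}
\]
for each $k$, where the empty sum returns $\spl{p}{v_1}$ at $k=1$.

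With this formula in hand, I would check the GKM condition edge by edge. The cycle $C_n$ has two sorts of edges: the path edges $e_{i,i+1}$ for $1 \le i \le n-1$, and the single closing edge $e_{1,n}$ joining $v_n$ back to $v_1$. For a path edge the partial sums telescope, so $\spl{p}{v_{i+1}} - \spl{p}{v_i} = \alpha_{1,n}\,\alpha_{i,i+1}$; this lies in $\alpha(e_{i,i+1})$ because $\alpha_{i,i+1} \in \alpha(e_{i,i+1})$ and an ideal absorbs multiplication by the ring element $\alpha_{1,n}$. For the closing edge, $\spl{p}{v_n} - \spl{p}{v_1} = \alpha_{1,n}\sum_{i=1}^{n-1}\alpha_{i,i+1}$, which lies in $\alpha(e_{1,n})$ for the symmetric reason, namely $\alpha_{1,n} \in \alpha(e_{1,n})$. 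Thus $\spl{p}{}$ satisfies the GKM condition at every edge and is a generalized spline.

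For nontriviality I would use that $\spl{p}{}$ is trivial precisely when all its coordinates agree, i.e.\ when every difference $\spl{p}{v_k} - \spl{p}{v_1} = \alpha_{1,n}\sum_{i=1}^{k-1}\alpha_{i,i+1}$ vanishes. The forward direction is the easy contrapositive: if $\alpha_{1,n}=0$, or if every $\alpha_{i,i+1}=0$, then all of these differences are zero and the spline is trivial. For the converse, suppose $\alpha_{1,n}\ne 0$ and choose the smallest index $m$ with $\alpha_{m,m+1}\ne 0$; since every earlier term vanishes, the relevant partial sum collapses to $\alpha_{m,m+1}$, giving $\spl{p}{v_{m+1}} - \spl{p}{v_1} = \alpha_{1,n}\,\alpha_{m,m+1}$.

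The crux---and the one place where more than a formal ideal computation is needed---is concluding that this last product is nonzero. A product of two nonzero ring elements can vanish in a general commutative ring (for instance $2\cdot 3 = 0$ in $\mathbb{Z}/6\mathbb{Z}$), so the backward implication genuinely rests on $R$ being an integral domain: there $\alpha_{1,n}\ne 0$ and $\alpha_{m,m+1}\ne 0$ force $\alpha_{1,n}\alpha_{m,m+1}\ne 0$, whence $\spl{p}{v_{m+1}}\ne \spl{p}{v_1}$ and $\spl{p}{}$ is nontrivial. I would therefore establish the nontriviality half under the integral-domain hypothesis used elsewhere in the paper, while noting that the spline construction itself requires no such assumption.
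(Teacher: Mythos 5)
Your verification that $\spl{p}{}$ is a generalized spline is essentially identical to the paper's: both unwind the matrix product to $\spl{p}{v_k} = \spl{p}{v_1} + \alpha_{1,n}\sum_{i=1}^{k-1}\alpha_{i,i+1}$, telescope to get $\spl{p}{v_{i+1}}-\spl{p}{v_i} = \alpha_{1,n}\alpha_{i,i+1}$ on path edges, and handle the closing edge $e_{1,n}$ by absorbing the factor $\alpha_{1,n}\in\alpha(e_{1,n})$ into that ideal. On the nontriviality claim you are actually more careful than the paper, and rightly so: the paper's proof asserts that the second term is nonzero ``namely when $\alpha_{1,n}$ and at least one of the $\alpha_{i,i+1}$ are nonzero,'' but as you observe this implication can fail over a ring with zero divisors (e.g.\ $\alpha_{1,n}=2$, $\alpha_{1,2}=3$ in $\mathbb{Z}/6\mathbb{Z}$ with all other $\alpha_{i,i+1}=0$ gives the trivial spline), so the ``exactly when'' genuinely needs the integral-domain hypothesis, and your minimal-index argument supplies the missing step cleanly. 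Your proof is correct; the only caveat is that, as literally stated for an arbitrary commutative ring, the final sentence of the theorem holds only in one direction, a point the paper's own proof glosses over.
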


\begin{proof}
We check that $\spl{p}{} \in R^{n}$ satisfies the GKM condition at every edge of $C_n$. For all $i$ with $2 \leq i \leq n-1$ we have
\begin{align*}
\spl{p}{v_{i+1}} - \spl{p}{v_i} & = \left[\spl{p}{v_1} + \alpha_{1,n}(\alpha_{1,2} + \ldots +  \alpha_{i-1,i} + \alpha_{i,i+1})\right]  - \left[ \spl{p}{v_1} + \alpha_{1,n}(\alpha_{1,2} + \ldots + \alpha_{i-1,i})\right]\\
& = \alpha_{1,n}\alpha_{i,i+1}
\end{align*}
which is in $\alpha(e_{i,i+1})$ by assumption on $\alpha_{i,i+1}$. 
It remains to check that the GKM condition is satisfied at edges $e_{1,2}$ and $e_{1,n}$.
At edge $e_{1,2}$ we have 
\[\spl{p}{v_2} - \spl{p}{v_1} = [\spl{p}{v_1} + \alpha_{1,n}\alpha_{1,2}] - \spl{p}{v_1}  = \alpha_{1,n}\alpha_{1,2} \]
which is in the ideal $\alpha(e_{1,2})$.
At edge $e_{1,n}$ we have
\[\spl{p}{v_n} - \spl{p}{v_1}  = [\spl{p}{v_1} + \alpha_{1,n}(\alpha_{1,2} + \ldots + \alpha_{n-1,n})] -  \spl{p}{v_1}
 = \alpha_{1,n}(\alpha_{1,2} + \ldots + \alpha_{n-1,n})\]
which is in the ideal $\alpha(e_{1,n})$.  Hence $\spl{p}{}$ is a generalized spline for $C_n$.  The spline $\spl{p}{}$ is nontrivial if and only if the second term is nonzero, namely when $\alpha_{1,n}$ and at least one of the $\alpha_{i,i+1}$ are nonzero.
\end{proof}

Theorem \ref{thm: Cycle1} actually does more: it identifies a collection of generalized splines for $C_n$ that are linearly independent for many choices of $R$.  Indeed, we can write the generalized splines from Theorem \ref{thm: Cycle1}  in parametric form:
\begin{align}
\label{LinIndepent for RCn}
\left[\begin{array}{l} \spl{p}{v_1}\\ \spl{p}{v_2} \\ \spl{p}{v_3} \\ \spl{p}{v_4} \\ \vdots \\ \spl{p}{v_n} \end{array}\right] 
= \spl{p}{v_1}\left[\begin{array}{c}1 \\1 \\1 \\1 \\\vdots \\1 \end{array}\right]+
\spl{\alpha}{1,n}\spl{\alpha}{1,2}\left[\begin{array}{c}0 \\1 \\1 \\1 \\\vdots \\1 \end{array}\right]+
\spl{\alpha}{1,n}\spl{\alpha}{2,3}\left[\begin{array}{c}0 \\0 \\1 \\1 \\\vdots \\1 \end{array}\right]+
\cdots +
\spl{\alpha}{1,n}\spl{\alpha}{n-1,n}\left[\begin{array}{c}0 \\0 \\0 \\0 \\\vdots \\1 \end{array}\right]
\end{align}
with coefficients $ \spl{p}{v_1} \in R$ and $\spl{\alpha}{i,i+1}  \in \alpha(e_{i,i+1}) = I_{i,i+1}$  for all $1 \leq i \leq n-1$.    The vectors $[1,1,1,\ldots,1]^T, [0,1,1,\ldots,1]^T, \ldots, [0,0,0,\ldots,1]^T$ are linearly independent in $R^n$ but are not necessarily elements of $R_{C_n}$.  If $R$ is an integral domain then for any fixed choices of $\alpha_{i,j} \in \alpha(e_{i,j})=I_{i,j}$ the vectors $[1,1,1,\ldots,1]^T, \spl{\alpha}{1,n}\spl{\alpha}{1,2}[0,1,1,\ldots,1]^T, \ldots, \spl{\alpha}{1,n}\spl{\alpha}{n-1,n}[0,0,0,\ldots,1]^T$ are both linearly independent and in $R_{C_n}$.  

We will use these kinds of splines---which arise naturally when considering the GKM matrix---repeatedly in subsequent sections of the paper.


\begin{example} \label{example: arbitrary K_4}
We return to the case of the complete graph $K_4$ whose ideals $\alpha(e)$ are all principal.  By Definition \ref{Gen Spline Defn}, the tuple
$\spl{p}{} = (\spl{p}{v_1}, \spl{p}{v_2}, \spl{p}{v_3}, \spl{p}{v_4})$ is a generalized spline for $K_4$ if and only if it satisfies the GKM conditions in Figure \ref{fig:Arbitrary K_4}.
\begin{figure}[ht]
\begin{minipage}[b]{0.45\linewidth}
\flushright
\begin{picture}(100,60)(-40,0)
					\put(-30,0){\circle*{5}}
					\put(-30,60){\circle*{5}}
					\put(30,0){\circle*{5}}
					\put(30,60){\circle*{5}}
					\put(-30,0){\line(0,1){60}}
					\put(-30,0){\line(1,0){60}}
					\put(-30,60){\line(1,0){60}}
					\put(30, 0){\line(0,1){60}}
					\put(-30,0){\line(1,1){60}}
					\put(-30,60){\line(1,-1){60}}
					\put(-30, -10){$v_1$}
					\put(30, -10){$v_2$}
					\put(-30, 65){$v_4$}
					\put(30, 65){$v_3$}
			\end{picture} 
\end{minipage}
\hspace{0.5cm}
\begin{minipage}[b]{0.45\linewidth}
\flushleft
$\spl{p}{v_1}-\spl{p}{v_2}  \in \alpha(e_{1,2}) = \langle \alpha_{1,2} \rangle$\\ 
$\spl{p}{v_1}-\spl{p}{v_3} \in \alpha(e_{1,3})= \langle \alpha_{1,3} \rangle$ \\
$\spl{p}{v_1}-\spl{p}{v_4} \in \alpha(e_{1,4})= \langle \alpha_{1,4} \rangle$ \\
$\spl{p}{v_2}-\spl{p}{v_3} \in \alpha(e_{2,3})= \langle \alpha_{2,3} \rangle$ \\
$\spl{p}{v_2}-\spl{p}{v_4} \in \alpha(e_{2,4}) = \langle \alpha_{2,4} \rangle$ \\
$\spl{p}{v_3}-\spl{p}{v_4} \in \alpha(e_{3,4}) = \langle \alpha_{3,4} \rangle$ 
\end{minipage}
\caption{GKM Conditions for $K_4$ whose ideals are all principal}
\label{fig:Arbitrary K_4}
\end{figure}

The difference
$\spl{p}{v_i}-\spl{p}{v_j}$ is in the ideal $\alpha(e_{i,j}) = \langle \alpha_{i,j} \rangle$ if and only if 
$\spl{p}{v_i}-\spl{p}{v_j} = q_{i,j}\alpha_{i,j}$ for some $q_{i,j} \in R$, so we represent these GKM conditions by the following matrix equation.  (The coefficient matrix is the GKM matrix.)

\begin{equation*}
\left[ \begin{array}{rrrr} 1 & -1 & 0 & 0 \\ 1 & 0& -1 & 0 \\ 1 & 0& 0 & -1 \\ 0 & 1 & -1 & 0 \\ 0 & 1 & 0 & -1 \\ 0 & 0 & 1 & -1\end{array} \right] 
\left[ \begin{array}{r} \spl{p}{v_1} \\ \spl{p}{v_2} \\ \spl{p}{v_3} \\ \spl{p}{v_4} \end{array} \right] = 
\left[ \begin{array}{ccccc} q_{1,2}, q_{1,3}, q_{1,4}, q_{2,3}, q_{2,4}, q_{3,4} \end{array} \right ]
\left[ \begin{array}{c} \alpha_{1,2} \\ \alpha_{1,3} \\ \alpha_{1,4} \\ \alpha_{2,3} \\ \alpha_{2,4} \\ \alpha_{3,4} \end{array} \right] 
\end{equation*}

After several invertible row operations in which we add various rows to other rows, we obtain an  equivalent system of equations such as that given in Figure \ref{fig 3.2}.    


\begin{figure}[!htbp]
$M_{K_4} = \left[ \begin{array}{rrrr|l} 1 & 0 & 0 &-1& q_{1,4}\alpha_{1,4} \\ 0 & 1 & 0 & -1 & q_{2,3}\alpha_{2,3} \\ 0 & 0& 1 & -1 & q_{3,4}\alpha_{3,4}\\ 0 & 0 & 0 & 0 & q_{1,2}\alpha_{1,2} - q_{1,4}\alpha_{1,4} + q_{2,4}\alpha_{2,4}\\ 0 & 0 & 0 & 0 & q_{1,3}\alpha_{1,3} - q_{1,4}\alpha_{1,4} + q_{3,4}\alpha_{3,4} \\ 0 & 0 & 0 & 0 & q_{2,3}\alpha_{2,3} - q_{2,4}\alpha_{2,4} + q_{3,4}\alpha_{3,4} \end{array} \right]$
\caption{A system equivalent to the extended GKM matrix for $K_4$ when all ideals are principal}
\label{fig 3.2}
\end{figure} 
\end{example}


\section{Generalized Splines for Trees}
\label{sec: Trees}

We will now use the GKM matrix to describe all generalized splines for trees.  We start by describing the generalized splines for paths, which uses the same argument as trees without the notational technicalities.  

Figure  \ref{fig:Path GKM} shows a matrix that is row-equivalent to the GKM matrix for the path $(P_n, \alpha)$.  The solutions can be written in parametric form as:
$$\left[\begin{array}{l} \spl{p}{v_1}\\ \spl{p}{v_2} \\ \spl{p}{v_3} \\ \spl{p}{v_4} \\ \vdots \\ \spl{p}{v_{n-1}} \\ \spl{p}{v_n} \end{array}\right] 
= \spl{p}{v_n}\left[\begin{array}{c}1 \\1 \\1 \\1 \\\vdots \\1 \\ 1\end{array}\right]+
\spl{\alpha}{n-1,n}\left[\begin{array}{c}1 \\1 \\1 \\1 \\\vdots \\1 \\ 0 \end{array}\right]+
\cdots +
\spl{\alpha}{3,4} \left[\begin{array}{c}1 \\1 \\1 \\0 \\\vdots \\0 \\0 \end{array}\right]+
\spl{\alpha}{2,3} \left[\begin{array}{c}1 \\1 \\0 \\0 \\\vdots \\0 \\0 \end{array}\right]+
\spl{\alpha}{1,2} \left[\begin{array}{c}1 \\0 \\0 \\0 \\\vdots \\0 \\0 \end{array}\right]$$
where the coefficients $ \spl{p}{v_n}$ and $\alpha_{i,i+1}$  for all $1 \leq i \leq n-1$ are chosen arbitrarily from the sets $R$ and $\alpha(e_{i,i+1}) = I_{i,i+1}$ respectively.  By Corollary \ref{corollary: solution set is gen splines}, this gives precisely the collection of generalized splines for the path $P_n$.

When $R$ is an integral domain, this also gives linearly independent vectors in $R_{P_n}$ (for any choices of $\alpha_{i,i+1} \in I_{i,i+1}$):
\begin{align}
\label{Basis for RPn}
\mathcal{B}_{R_{P_n}} = 
\left\{ 
\left[\begin{array}{c}1 \\1 \\1 \\1 \\\vdots \\1 \\ 1\end{array}\right] , 
\left[\begin{array}{c}\alpha_{n-1,n} \\\alpha_{n-1,n} \\\alpha_{n-1,n} \\\alpha_{n-1,n} \\\vdots \\\alpha_{n-1,n} \\ 0 \end{array}\right], 
\cdots,
\left[\begin{array}{c}\alpha_{3,4} \\\alpha_{3,4} \\\alpha_{3,4} \\ 0 \\\vdots \\ 0 \\ 0 \end{array}\right], 
\left[\begin{array}{c}\alpha_{2,3} \\\alpha_{2,3} \\ 0 \\ 0 \\\vdots \\ 0 \\ 0 \end{array}\right], 
\left[\begin{array}{c}\alpha_{1,2} \\ 0\\ 0 \\ 0 \\\vdots \\ 0 \\ 0 \end{array}\right] 
\right \}
\end{align}

Morally speaking, this decomposition describes something very close to a basis for the generalized splines---as long as we can write a basis for the ideals $I_{i,i+1}$.  For instance, when each ideal $I_{i,i+1}$ is principal and $\alpha_{i,i+1}$ denotes the generator of  $I_{i,i+1}$ for each $1 \leq i \leq n-1$, then these vectors form a basis for $R_{P_n}$.  In general, we won't be able to find a basis for $R_G$ because we can't even necessarily find bases for the ideals $I_{i,i+1}$.  Even when $R$ is a polynomial ring, we need all of the technical tools developed in the theory of Gr\"{o}bner bases to compute bases of ideals in $R$.    

However, we can find generators for the splines on trees.  We  reformulate the essential property of this basis from the point of view of trees.  Observe that  $\spl{p}{} \in R_{P_n}$ must satisfy the following property for any $v_i, v_j \in V(P_n)$ with  $i < j$:
\begin{equation}
\label{GenSplinePath}
 \spl{p}{v_j} =  \spl{p}{v_i} + \displaystyle \sum_{k = i}^{j-1} \spl{\alpha}{k,k+1} \text{ for some }  \spl{\alpha}{k,k+1} \in I_{k,k+1}.
 \end{equation}

Trees are more complicated than paths, so describing the general result precisely is more complicated.  The main idea is similar to above, though.  It relies on the fact that there is exactly one path between any two vertices in a tree, as well as on Equation \eqref{GenSplinePath}.  

\begin{theorem}
\label{Tree}
Let $T = (V, E, \alpha)$ be a finite edge--labeled tree.  The tuple $\spl{p}{} \in R^{|T|}$ is a generalized spline $\spl{p}{} \in R_T$  if and only if given any two vertices $v_i, v_j \in V$ we may write
\begin{equation}
\label{treeCondition}
\spl{p}{v_j} = \spl{p}{v_i} + \spl{\alpha}{i,i_1} + \ldots + \spl{\alpha}{i_{m-1},i_m} +\spl{\alpha}{i_{m},j} \text{ for some }\spl{\alpha}{l,k} \in \alpha(e_{l,k})=I_{l,k}
\end{equation}
where $v_i, v_{i_1}, \ldots, v_{i_m}, v_j$ are the vertices in the unique path connecting $v_i$ and $v_j$ in the tree $T$.
Furthermore $\spl{p}{}$ is  non-trivial if and only if at least one of the $\spl{\alpha}{l,k}$ is nonzero.
\end{theorem}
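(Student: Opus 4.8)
The plan is to prove both directions of the characterization by exploiting the single distinctive feature of a tree: between any two vertices there is a \emph{unique} path. Once that path is fixed, Equation \eqref{treeCondition} is just a telescoping identity, and the whole theorem collapses to the observation that the GKM condition on a single edge is the special case of \eqref{treeCondition} for a path of length one.

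For the forward implication, suppose $\spl{p}{} \in R_T$ and fix vertices $v_i, v_j$ with unique connecting path $v_i, v_{i_1}, \ldots, v_{i_m}, v_j$. Across each edge of this path I would take the $\alpha$-label to be the difference of $\spl{p}{}$ at its two endpoints; for instance $\spl{\alpha}{i,i_1} := \spl{p}{v_{i_1}} - \spl{p}{v_i}$, and likewise for each consecutive pair. Each such difference lies in the corresponding ideal $I_{l,k} = \alpha(e_{l,k})$ precisely because $\spl{p}{}$ satisfies the GKM condition on that edge, and summing the differences across all $m+1$ edges of the path telescopes to $\spl{p}{v_j} - \spl{p}{v_i}$, which is exactly \eqref{treeCondition}. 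For the converse, suppose \eqref{treeCondition} holds for every pair of vertices, and let $e_{i,j} \in E$ be an arbitrary edge. The unique path between its endpoints is the edge itself (the case $m=0$), so \eqref{treeCondition} reads $\spl{p}{v_j} - \spl{p}{v_i} = \spl{\alpha}{i,j} \in I_{i,j}$, i.e. the GKM condition holds at $e_{i,j}$. Since $e_{i,j}$ was arbitrary, $\spl{p}{} \in R_T$.

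For non-triviality, recall that $\spl{p}{}$ is trivial exactly when $\spl{p}{} \in R\spl{1}{}$, i.e. when all of its coordinates are equal. I would work with the canonical choice of the $\spl{\alpha}{l,k}$ produced in the forward direction, namely the edge-differences $\spl{\alpha}{l,k} = \spl{p}{v_k} - \spl{p}{v_l}$. If every such edge-difference vanishes, then $\spl{p}{v_l} = \spl{p}{v_k}$ on each edge, and since $T$ is connected this forces all coordinates to coincide, so $\spl{p}{}$ is trivial; conversely, if $\spl{p}{}$ is trivial then every difference, in particular every edge-difference, is zero. Hence $\spl{p}{}$ is nontrivial if and only if at least one edge-difference $\spl{\alpha}{l,k}$ is nonzero. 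The one point requiring care is that the $\spl{\alpha}{l,k}$ in \eqref{treeCondition} are not unique (for a path of length $\geq 2$ one may shift a common element of two adjacent edge-ideals between the labels of consecutive edges); the statement is cleanest read for this canonical edge-difference representation, and since for any pair the sum $\sum \spl{\alpha}{l,k} = \spl{p}{v_j} - \spl{p}{v_i}$ is independent of the representation, a nonzero difference forces a nonzero summand in \emph{every} representation.

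None of these steps is genuinely hard: the argument is bookkeeping built on the uniqueness of paths in a tree together with the telescoping sum. The only real obstacle is notational, namely setting up the indexing of the path $v_i, v_{i_1}, \ldots, v_{i_m}, v_j$ cleanly, together with the mild logical subtlety in the non-triviality clause discussed above, where one must pin down which representation of the $\spl{\alpha}{l,k}$ is intended.
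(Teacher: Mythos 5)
Your proof is correct, and it takes a genuinely different (and more direct) route than the paper's. The paper proves Theorem \ref{Tree} by induction on $|V|$: the base cases are $|V|=1,2$, and in the inductive step the converse direction restricts $\spl{p}{}$ to the unique path between $v_i$ and $v_j$, then either applies the inductive hypothesis or, when that path exhausts all $n+1$ vertices, reads Equation \eqref{treeCondition} off the first row of the row-reduced GKM matrix for $P_n$ (Figure \ref{fig:Path GKM}). You bypass both the induction and the GKM-matrix machinery entirely: the forward direction is the telescoping identity with the canonical choice $\spl{\alpha}{l,k} := \spl{p}{v_k}-\spl{p}{v_l}$, and the converse is the observation that an edge is its own unique path, so \eqref{treeCondition} specializes to the GKM condition. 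Your argument is shorter and self-contained, while the paper's version has the virtue of exercising the GKM-matrix formalism it develops in Section \ref{GKM Matrix} and reusing the explicit parametric solution for paths. One place where you are actually more careful than the paper: you note that the $\spl{\alpha}{l,k}$ in \eqref{treeCondition} are not unique, and you resolve the resulting ambiguity in the nontriviality clause by observing that the sum $\sum\spl{\alpha}{l,k}=\spl{p}{v_j}-\spl{p}{v_i}$ is representation-independent, so a nonzero difference forces a nonzero summand in every representation; the paper glosses over this point.
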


\begin{proof}
We proceed via induction on $|V|$. The base case $|V|=1$ is trivial since $E = \emptyset$. We also prove the case $|V| = 2$, namely when $T$ is a path on two vertices. Denote the vertices of $T$ by $v_1$ and $v_2$ the edge set by $E=\{e_{1,2}\}$. Now let $\spl{p}{} = (p_{v_1}, p_{v_2}) \in R^2$.  
By Definition \ref{Gen Spline Defn} we know $\spl{p}{} \in R_T$ if and only if $\spl{p}{v_1} - \spl{p}{v_2} \in I_{1,2}$.  We rewrite this as $\spl{p}{v_1} = \spl{p}{v_2} + \spl{\alpha}{1,2}$ for some choice of $\spl{\alpha}{1,2} \in I_{1,2}$.   In other words $\spl{p}{}$ is a generalized spline for $T$ if and only if $\spl{p}{}$ satisfies Equation \eqref{treeCondition} for all pairs of vertices in $V = \{v_1, v_2\}$. Furthermore $\spl{p}{}$ is non-trivial if and only if $\spl{p}{v_1} \neq \spl{p}{v_2}$ or equivalently $\spl{\alpha}{1,2} \neq 0$.

Assume  the theorem holds for every tree with at most $n$ vertices and let $T^{\prime} = (V^\prime, E^\prime, \alpha)$  with $|V^\prime| = n+1$. 
Suppose $\spl{p}{} \in R^{|V'|}$ satisfies Equation \eqref{treeCondition} for all pairs of vertices in $V^\prime$ and let $e_{h,k} \in E^\prime$ be an arbitrary edge.
Since $v_h$ and $v_k$ are adjacent in $T^\prime$ we know $\spl{p}{k} = \spl{p}{h} + \spl{\alpha}{h,k}$ for some $ \spl{\alpha}{h,k} \in I_{h,k}$ by Equation \eqref{treeCondition}.  Rewriting this condition, we obtain  $\spl{p}{k} - \spl{p}{h} \in I_{h,k}$. Since $e_{h,k}$ was arbitrary we conclude $\spl{p}{} \in R_{T^\prime}$. 

Conversely, suppose that $\spl{p}{} \in R_{T^\prime}$.  We show that $\spl{p}{}$ satisfies Equation \eqref{treeCondition} for all vertices in $V^\prime$. Without loss of generality, label the vertices of $T^\prime$ so that $v_{n+1}$ is a leaf adjacent to $v_n$.  Choose arbitrary $v_i, v_j \in V^\prime$ and let $v_i, v_{i_1}, \ldots, v_{i_m}, v_j$ denote the vertices in the unique path connecting $v_i$ and $v_j$ in $T^\prime$.  Let $T$ denote the subgraph $T \subseteq T^\prime$ induced by $v_i, v_{i_1}, \ldots, v_{i_m}, v_j$.  The graph $T$ is  a tree itself, since it is a connected subgraph of a tree.  The restriction of $\spl{p}{}$ to the vertices in $T$ is a generalized spline for $T$ by Proposition \ref{proposition: subgraphs}.  If $T$ has at most $n$ vertices then the inductive hypothesis implies that $\spl{p}{}$ satisfies Equation \eqref{treeCondition} for the pair $v_i, v_j$.   If $T$ has $n+1$ vertices then $T$ is a path of length $n+1$.  Figure \ref{fig:Path GKM} shows a system equivalent to the GKM matrix in this case.  The first row of this matrix describes the equation
\[\spl{p}{v_j} = \spl{p}{v_i} + \spl{\alpha}{i,i_1} + \ldots + \spl{\alpha}{i_{m-1},i_m} +\spl{\alpha}{i_{m},j}\]for some set $\spl{\alpha}{l,k} \in \alpha(e_{l,k})=I_{l,k}$.  In other words, this graph also satisfies Equation \eqref{treeCondition}, proving our claim.

Finally, the spline $\spl{p}{}$ is nontrivial if and only if there exist some pair $v_i, v_j \in V^\prime$ such that $\spl{p}{v_i} \neq \spl{p}{v_j}$.  This is equivalent to saying that  the coefficients $\spl{\alpha}{i, i_1}, \spl{\alpha}{i_1,i_2}, \ldots,  \spl{\alpha}{i_{m-1},i_m},\spl{\alpha}{i_{m},j}$ associated to the path $v_i, v_{i_1}, \ldots, v_{i_m}, v_j$ are not all equal to $0$, by Equation \eqref{treeCondition}. Equivalently there exists a pair $l,k$ with $\spl{\alpha}{l,k} \neq 0$ as desired. 
\end{proof}


\section{Existence of generalized splines and lower bounds on the rank of $R_G$}\label{section: submodules}
 
We now address a fundamental question: do nontrivial generalized splines exist for an arbitrary edge--labeled graph $(G,\alpha)$?  We solved this question in the case of edge--labeled cycles $(C_n, \alpha)$ in Theorem \ref{thm: Cycle1}.  The answer in that case (yes) leads naturally to a stronger result: Equation \eqref{LinIndepent for RCn} actually identifies a collection of generalized splines that are linearly independent when $R$ is an integral domain. The condition that $R$ be an integral domain is crucial, as Bowden and the third author show in forthcoming work \cite{BT}.

Similarly, we will answer the existence question for generalized splines on arbitrary $(G,\alpha)$ (yes, unless $G$ consists of a single vertex) by constructing a collection of generalized splines that are linearly independent when $R$ is an integral domain.  This provides a lower bound on the rank of $R_G$ as an $R$-module when $R_G$ is a free $R$-module, and constructs a collection of generators associated to vertices when the ideal $\alpha(e)$ is principal for each edge $e$.  All of these hypotheses are satisfied for the generalized splines used to construct equivariant cohomology and equivariant $K$-theory, where constructing bases is an important and well-studied question \cite{GuZ1}, \cite{GT}.  Geometrically, Theorem \ref{thm: Complete2} and Corollary \ref{cor: free submodule} partially extend existing results on flow-up classes in equivariant cohomology, since we broaden the class of varieties for which we can construct linearly-independent rank $n$ collections
of flow-up classes.  The result is new for equivariant $K$-theory.  We note, however, that our flow-up classes are generally not a basis for $R_G$.  

Corollary \ref{cor: free submodule} proves that each $R_G$ contains a free submodule of rank $n$ as a special (and simpler) case of Theorem \ref{thm: Complete2}.

\begin{theorem}
\label{thm: Complete2}
Let $(G, \alpha)$ be a finite edge--labeled graph.  Fix any subgraph $G'$ of $G$ and let $\spl{p}{}$ be a generalized spline for $(G', \alpha|_{G'})$.  Let $N_{G'} =  \prod_S \alpha_{i,j}$ where each $\alpha_{i,j}$ is a nonzero element of the ideal $\alpha(v_iv_j)$ and the product is taken over the set $S$ of edges incident to a vertex in $G'$ but not in $G'$. Namely
\[S = \{\alpha_{i,j}: v_iv_j \in E(G-G') \textup{ and } v_i \in V(G') \textup{ or } v_j \in V(G')\}.\] 
Then the vector $\spl{q}{}$ defined by 
$$\spl{q}{v_i} = \left\{
     \begin{array}{lr}
       N_{G'} \spl{p}{v_i} & \text{if }  v_i \in V(G')\\
       0 &  \text{if } v_i \notin V(G')
     \end{array}
   \right \}$$ 
is a generalized spline for $G$.
 \end{theorem}
 
 \begin{proof}
For each edge $v_iv_j \in E(G)$, there are three possibilities:
\begin{enumerate}
\item  Both $v_i, v_j \in V(G')$.  Then $\spl{p}{v_i}-\spl{p}{v_j}$ satisfies the GKM condition in $G'$.  Thus $\spl{q}{v_i}-\spl{q}{v_j} = N_{G'}(\spl{p}{v_i}-\spl{p}{v_j})$ satisfies the GKM condition for $v_i, v_j$ in $G$ since $\alpha(v_iv_j)$ is an ideal and $N_{G'} \in R$.
\item Neither $v_i$ nor $v_j$ is in $V(G')$.  Then the difference $\spl{q}{v_i}-\spl{q}{v_j}=0-0$ vacuously satisfies the GKM condition for $v_i, v_j$ in $G$.
 \item  Exactly one of $v_i,v_j$ is in $V(G')$.  Suppose that $v_i \in V(G')$ and $v_j \notin V(G')$.  Consider the difference $\spl{q}{v_i}-\spl{q}{v_j} = N_{G'}(\spl{p}{v_i}-\spl{p}{v_j})$.  The factor $N_{G'}$ is in the ideal $\alpha(v_iv_j)$ by definition of $N_{G'}$ and by definition of ideals.  Hence the product  $N_{G'}(\spl{p}{v_i}-\spl{p}{v_j})$ satisfies the GKM condition for $v_i, v_j$ in $G$.
 \end{enumerate}
We confirmed that the GKM condition is satisfied by $\spl{q}{}$ in all three cases and for every edge $v_iv_j \in E(G)$, as desired.  
 \end{proof}

The next corollary constructs classes that look like what are called ``flow-up" classes in geometric applications.  Given a partial order on the vertices of $G$, a {\em flow-up class} associated to the vertex $v$ is a generalized spline $\spl{p^v}{}$ so that for each vertex $u$ with $u \not > v$ the spline satisfies $\spl{p^v}{u} = 0$.  (In geometric applications, flow-up classes satisfy additional conditions as well.)  These classes occur naturally in geometric applications: the partial order comes from a suitably-generic one-dimensional torus action on the variety (and hence on the graph), and the spline is the cohomology class associated to the subvariety that flows into the vertex $v$.  The most famous examples of flow-up classes occur in flag varieties and Grassmannians, where they are known as Schubert classes and where they in fact form a basis for the ring of generalized splines (equivariant cohomology rings, in the geometric context).  

Our motivation for the next sequence of corollaries comes from these geometric applications.  In those cases, the ideals $\alpha(e)$ for each edge $e$ are principal.  If some ideals were not principal, the results that follow could be refined to construct a larger free submodule of $R_G$.

We now construct a rank-$n$ free submodule of the generalized splines for an arbitrary edge--labeled graph $(G,\alpha)$ using a collection of linearly-independent flow-up classes.  The reader interested only in the special case of this corollary could prove it directly by taking the special case when $G'$ is a single vertex.

\begin{corollary}
\label{cor: free submodule}
Let $R$ be an integral domain and $(G,\alpha)$ a connected edge--labeled graph on $n$ vertices.  Then $R_G$ contains a free $R$-submodule of rank $n$.
\end{corollary}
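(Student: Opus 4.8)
The plan is to produce $n$ generalized splines for $G$ that assemble into an upper-triangular matrix with nonzero diagonal, and then to invoke Lemma \ref{lemma: upper-triangular}, which immediately upgrades such a family to a free submodule of rank $n$. The splines themselves will be flow-up classes manufactured directly from Theorem \ref{thm: Complete2}, one attached to each vertex.

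First I would fix a total ordering $v_1, v_2, \ldots, v_n$ of the vertices of $G$. (If one wants the $\spl{q_i}{}$ to be genuine flow-up classes supported on connected sets, one can take the ordering produced by a breadth-first search of a spanning tree of the connected graph $G$, so that each initial segment induces a connected subgraph; but no such refinement is needed for the counting argument.) For each $i$, let $G_i'$ be the subgraph of $G$ induced by $\{v_1, \ldots, v_i\}$. The trivial spline $\spl{1}{}$, restricted to $G_i'$, is a generalized spline for $(G_i', \alpha|_{G_i'})$ by Proposition \ref{ring proof}, so I may feed it into Theorem \ref{thm: Complete2}. That theorem then returns a generalized spline $\spl{q_i}{}$ for all of $G$ with $\spl{q_i}{v_j} = N_{G_i'}$ whenever $v_j \in V(G_i')$, i.e. whenever $j \le i$, and $\spl{q_i}{v_j} = 0$ whenever $j > i$.

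Next I would assemble the $n \times n$ matrix whose $(j,i)$ entry is $\spl{q_i}{v_j}$. By the vanishing pattern just described, this entry is zero whenever $j > i$, so the matrix is upper-triangular, and its $i$-th diagonal entry is $\spl{q_i}{v_i} = N_{G_i'}$. Because $R$ is an integral domain and each $N_{G_i'}$ is a product of nonzero elements of $R$, every diagonal entry is nonzero. Lemma \ref{lemma: upper-triangular} then yields that $\{\spl{q_i}{} : i = 1, \ldots, n\}$ is linearly independent and spans a free $R$-submodule of $R_G$ of rank $n$, which is exactly the assertion.

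The routine bookkeeping, namely that the $\spl{q_i}{}$ are honestly splines and that the resulting matrix is triangular, is entirely absorbed into the two cited results, so the one genuine point to verify is that each diagonal entry $N_{G_i'}$ is nonzero. This is precisely where the integral domain hypothesis is indispensable, since a product of nonzero elements can vanish in a ring with zero divisors; it also uses that each edge ideal contains a nonzero element, the standing assumption already built into the definition of $N_{G'}$ in Theorem \ref{thm: Complete2}. I would flag that this assumption is not cosmetic: a single edge labeled by the ideal $\langle 0 \rangle$ has ring of splines isomorphic to $R$, of rank $1$ rather than $2$, so some nonvanishing hypothesis on the edge labels is forced.
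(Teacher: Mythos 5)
Your proof is correct and follows essentially the same route as the paper's: both apply Theorem \ref{thm: Complete2} to the trivial spline on a family of subgraphs indexed by the vertices to obtain flow-up classes forming an upper-triangular matrix with nonzero diagonal, and then invoke Lemma \ref{lemma: upper-triangular}. The only difference is that the paper takes the subgraphs to be the paths $T_i$ from a fixed root $v_1$ to $v_i$ in a spanning tree (ordering vertices by distance from $v_1$), whereas you take the induced subgraphs on initial segments of an arbitrary vertex ordering --- an immaterial variation.
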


\begin{proof}
Enumerate the vertices in $V(G)$ as $v_1, v_2, \ldots, v_n$. For each $v_i$ define $G'_{i}$ to be the subgraph consisting of exactly vertex $v_i$. Clearly $\spl{p}{}=\spl{1}{}$ is a generalized spline for $(G'_i, \alpha|_{G'_i})$ for all $1 \leq i \leq n$. Then Theorem \ref{thm: Complete2} yields generalized splines $\{\spl{q_i}{}: i=1,\ldots,n\}$ for $G$, where $\spl{q_i}{v_j} = \delta_{ij}N_{G'_{i}}$ and $N_{G'_{i}}=\prod_{j \neq i} \alpha_{i,j}$ for arbitrarily chosen $0 \neq \alpha_{i,j} \in \alpha(v_iv_j)$. We show that this set is linearly independent in the $R$-module $R_G$. Suppose $\sum_{i=1}^n c_i \spl{q_i}{}=\spl{0}{}$ for coefficients $c_i \in R$.  For each $1 \leq j \leq n$, evaluation at $v_j$ yields 
\begin{equation}
\label{eq: linear independence}
\sum_{i=1}^n c_i \spl{q_i}{v_j}= \sum_{i=1}^n c_i \delta_{ij}N_{G'_{i}} = c_jN_{G'_{j}} = 0
\end{equation}
Since $R$ is an integral domain and each $\alpha_{i,j} \neq 0$ it follows that $N_{G'_{j}} \neq 0$ for all $j$.  Hence Equation \eqref{eq: linear independence} implies $c_j = 0$ for all $1 \leq j \leq n$ so that $\{\spl{q_i}{}: i=1,\ldots,n\}$ is linearly independent in $R_G$ and therefore spans a free $R$-submodule of rank $n$.
\end{proof}
%
%

The next corollary notes a particular choice for the scaling factor $N_{G'}$ in  Theorem \ref{thm: Complete2}  that can be useful in the kinds of examples that arise in geometric applications.  All of the hypotheses hold in typical geometric applications (equivariant cohomology with field coefficients, equivariant $K$-theory with field coefficients, and classical algebraic splines).

\begin{corollary}
Fix an edge--labeled graph $(G,\alpha)$ and a UFD $R$.  Suppose that for each edge $e$ the ideal $\alpha(e)$ is principal and choose a generator $\alpha_{i,j}$ for each edge $e=v_iv_j$.  Then for any subgraph $G'$ of $G$ we may apply Theorem \ref{thm: Complete2} by choosing 
$$N_{G'} = \textup{lcm} \{\alpha_{i,j} : v_iv_j \in E(G-G') \textup{ and } v_i \in V(G') \textup{ or } v_j \in V(G')\}$$
\end{corollary}

The next two corollaries of Theorem \ref{thm: Complete2} address particular ways to construct (nontrivial) generalized splines for $G$ from subgraphs of $G$.

\begin{corollary}
If $G$ contains any subgraph $G'$ for which $R_{G'}$ contains a nontrivial generalized spline then $R_{G}$ also contains a nontrivial generalized spline.
\end{corollary}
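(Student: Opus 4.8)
The plan is to invoke Theorem~\ref{thm: Complete2} directly and then check one thing. Starting from a nontrivial generalized spline $\spl{p}{}$ for $(G', \alpha|_{G'})$, that theorem produces an honest generalized spline $\spl{q}{} \in R_G$, namely $\spl{q}{v_i} = N_{G'}\spl{p}{v_i}$ for $v_i \in V(G')$ and $\spl{q}{v_i} = 0$ for $v_i \notin V(G')$, where $N_{G'}$ is the product of the chosen nonzero edge elements $\alpha_{i,j}$ over all edges $v_iv_j \notin E(G')$. Since membership of $\spl{q}{}$ in $R_G$ is exactly the conclusion of Theorem~\ref{thm: Complete2}, the entire content of the corollary reduces to the single remaining claim that $\spl{q}{}$ is itself nontrivial, i.e.\ that $\spl{q}{} \notin R\spl{1}{}$.

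First I would unpack nontriviality of $\spl{p}{}$: because $\spl{p}{} \notin R\spl{1}{}$ its coordinates are not all equal, so there are vertices $v_a, v_b \in V(G')$ with $\spl{p}{v_a} \neq \spl{p}{v_b}$, and in particular at least one coordinate of $\spl{p}{}$ is nonzero (note I do \emph{not} need an edge of $G'$ whose endpoints disagree, so there is no issue if $G'$ is disconnected). I would then split on whether $V(G') = V(G)$. If $V(G') \subsetneq V(G)$, then $\spl{q}{}$ already has a zero coordinate at any vertex outside $V(G')$, so $\spl{q}{}$ is trivial only if it is identically zero; hence it suffices to exhibit one vertex $v_i$ where $\spl{p}{v_i} \neq 0$ and argue $\spl{q}{v_i} = N_{G'}\spl{p}{v_i} \neq 0$. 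If instead $V(G') = V(G)$, I would compare $\spl{q}{v_a} = N_{G'}\spl{p}{v_a}$ with $\spl{q}{v_b} = N_{G'}\spl{p}{v_b}$ and show the difference $N_{G'}(\spl{p}{v_a}-\spl{p}{v_b})$ is nonzero.

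The crux, in both cases, is preservation of nontriviality under multiplication by $N_{G'}$: I must rule out that this scaling factor annihilates the nonzero element ($\spl{p}{v_a}-\spl{p}{v_b}$, or the nonzero coordinate in the first case). This is immediate when $R$ is an integral domain, the standing hypothesis behind the existence results of this section (cf.\ Corollary~\ref{cor: free submodule}): there $N_{G'}$ is a product of nonzero elements and hence nonzero, and a product of two nonzero elements of a domain is nonzero. I expect this to be the one genuinely delicate point rather than a routine verification, since over a general commutative ring the naive $\spl{q}{}$ can collapse to the zero spline when $N_{G'}$ is a zero divisor (for example over $R = \mathbb{Z}/4\mathbb{Z}$ with an edge ideal $\langle 2\rangle$). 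In that generality one would not scale the whole spline but instead propagate a single nonzero value from $\spl{p}{}$ outward through zeros, and I would flag this as the step requiring care rather than absorbing it silently.
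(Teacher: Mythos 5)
Your proposal matches the paper's treatment: the paper states this corollary without proof, as an immediate consequence of Theorem \ref{thm: Complete2}, and your argument is exactly that application of the theorem plus the check that scaling by $N_{G'}$ preserves nontriviality. Your observation that this last step genuinely requires $R$ to be an integral domain (so that $N_{G'}$ is not a zero divisor) is a fair point the paper leaves implicit; it is the standing hypothesis of this section, stated explicitly in the neighboring Corollaries \ref{cor: free submodule} and \ref{corr: Proper Subgraph}.
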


 \begin{example}
 \label{K4 example}
We can construct generalized splines for the edge--labeled graph $(K_4, \alpha)$ given in Figure \ref{fig:K4 spline} using these corollaries.  The vertex in the upper-left corner is $v_1$ and the rest of the vertices in clockwise order around the square are $v_2, v_3, v_4$.  Let $C_4$ denote the Hamiltonian cycle determined by ordering the vertices $v_1v_2v_3v_4$.  Let 
\[N_{C_4} = \textup{lcm}\{\alpha(v_1v_3), \alpha(v_2v_4)\} = \textup{lcm} \{\alpha_5, \alpha_6\}\]
with the labeling in Figure \ref{fig:K4 spline}.
Theorem \ref{thm: Cycle1} constructed many nontrivial generalized splines for $C_4$ including 
\[\spl{p}{} = \left[\begin{array}{c} 0 \\ \alpha(v_1v_4)\alpha(v_1v_2) \\ \alpha(v_1v_4)(\alpha(v_1v_2) + \alpha(v_2v_3)) \\ \alpha(v_1v_4)(\alpha(v_1v_2) + \alpha(v_2v_3)+ \alpha(v_3v_4))  \end{array} \right] =  \left[\begin{array}{c} 0 \\ \alpha_4\alpha_1 \\ \alpha_4(\alpha_1+\alpha_2) \\ \alpha_4(\alpha_1+\alpha_2+\alpha_3) \end{array} \right].\]
The corollaries show that the multiple $N_{C_4}\spl{p}{}$ is a generalized spline for $K_4$.
\end{example}

\begin{corollary}
\label{cor: Proper Subgraph}
Let $R$ be an integral domain.  If $G$ contains at least two vertices then $R_G$ contains a nontrivial generalized spline.
\end{corollary}

\begin{proof}
The vertex set $V$ has at least two vertices, so $V$ has a proper subset.  Let $G'$ denote a subgraph of $G$ induced by any proper subset of $V$.  Choose the unit $\spl{1}{} \in R_{G'}$ for the spline $\spl{p}{}$ in  Theorem \ref{thm: Complete2}.  The factor $N_{G'}$ is nonzero because $R$ is an integral domain.
\end{proof}

\section{Decomposing $R_G$ as an intersection}\label{section: intersection}

This section describes two ways to express $R_G$ as an intersection of rings $R_{G_i}$ for simpler graphs $G_i$.  Both are inspired by the GKM matrix, which allows us to recognize and manipulate the GKM conditions for various subgraphs of $G$.

In the first decomposition, we essentially reorganize the GKM matrix and identify the GKM matrices associated to subgraphs of $G$ inside the GKM matrix for $G$.  When these subgraphs are the edges themselves, we recover the result that the generalized splines are the intersection of the GKM conditions on all edges independently.  We can alternatively take these subgraphs to be trees, whose generalized splines we identified completely in Section \ref{sec: Trees}; this reduces the number of intersections needed to calculate $R_G$.  

In the other decomposition, we row-reduce the GKM matrix in a natural way to demonstrate that $R_G$ is the intersection of the generalized splines for a particular collection of subcycles of $G$.  This demonstrates how the combinatorial perspective can contribute to the study of generalized splines and GKM theory: cycles are subgraphs that do not arise from geometric considerations but are natural in this more general combinatorial setting.  It also reinforces Rose's results showing the importance of cycles in studying splines \cite{R1, R2}.  Handschy, Melnick, and Reinders identify a basis for generalized splines with integer coefficients over cycles in forthcoming work \cite{HMR}.  Bowden, Cao, Hagen, King, and Reinders give a simpler basis for generalized splines over cycles whose edge--labels satisfy a coprimality condition; this allows them to identify the ring structure of the generalized splines completely \cite{BHKR}.

We begin by expressing the ring of generalized splines as an intersection of generalized splines for subgraphs.

\begin{theorem}\label{theorem: union of graphs}
Let $(G, \alpha)$ be an edge--labeled graph.  Suppose $G_1, G_2, \ldots, G_k$ are a collection of spanning subgraphs of $G$ whose union is $G$, in the sense that $V(G_i)=V(G)$ for all $i$ and $\bigcup_{i=1}^k E(G_i) = E(G)$.  Let $\alpha_i = \alpha |_{G_i}$ be the edge--labelings given by restriction for each $i$.  Then 
\[R_G = \bigcap_{i=1}^k R_{G_i}.\]
\end{theorem}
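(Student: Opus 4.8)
The plan is to prove the two set inclusions $R_G \subseteq \bigcap_{i=1}^k R_{G_i}$ and $\bigcap_{i=1}^k R_{G_i} \subseteq R_G$ separately, reducing everything to the definition of the GKM condition edge by edge. Both directions are immediate once we keep track of which graph contains which edge; the only structural fact we genuinely need is that the $G_i$ share the common vertex set $V(G)$, so that a tuple $\spl{p}{} \in \bigoplus_{v \in V} R$ is simultaneously a candidate spline for $G$ and for every $G_i$ without any reindexing of coordinates.

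First I would prove the forward inclusion. Suppose $\spl{p}{} \in R_G$. Fix an index $i$ and an arbitrary edge $e = uv \in E(G_i)$. Since $E(G_i) \subseteq E(G)$, the edge $e$ is also an edge of $G$, so $\spl{p}{}$ satisfies the GKM condition at $e$ with respect to $\alpha$, that is $\spl{p}{u} - \spl{p}{v} \in \alpha(e)$. Because $\alpha_i = \alpha|_{G_i}$ agrees with $\alpha$ on the edges of $G_i$, this is exactly the GKM condition for $(G_i, \alpha_i)$ at $e$. As $e$ was arbitrary in $E(G_i)$, we get $\spl{p}{} \in R_{G_i}$; and as $i$ was arbitrary, $\spl{p}{} \in \bigcap_{i=1}^k R_{G_i}$. (This direction is essentially Proposition \ref{proposition: subgraphs} applied to each spanning subgraph $G_i$, noting that the restriction to $V(G_i)=V(G)$ changes nothing.)

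Next I would prove the reverse inclusion, which is where the spanning hypothesis $\bigcup_{i=1}^k E(G_i) = E(G)$ does the work. Suppose $\spl{p}{} \in \bigcap_{i=1}^k R_{G_i}$, and let $e = uv \in E(G)$ be an arbitrary edge. By the covering assumption there is some index $i$ with $e \in E(G_i)$. Since $\spl{p}{} \in R_{G_i}$, it satisfies the GKM condition for $(G_i, \alpha_i)$ at $e$, so $\spl{p}{u} - \spl{p}{v} \in \alpha_i(e) = \alpha(e)$. Hence $\spl{p}{}$ satisfies the GKM condition at every edge of $G$, which means $\spl{p}{} \in R_G$.

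I do not expect a genuine obstacle here: the result is a bookkeeping translation of the definition, and the only point requiring care is to invoke the right half of the hypothesis in each direction — the inclusion $E(G_i) \subseteq E(G)$ gives the forward containment, while the covering $\bigcup_i E(G_i) = E(G)$ gives the reverse. I would simply remark that these two containments together yield the claimed equality of rings (the ring structure is inherited coordinatewise from $\bigoplus_{v \in V} R$, so it suffices to match the underlying sets).
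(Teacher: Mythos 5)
Your proposal is correct and follows essentially the same route as the paper: the forward inclusion via Proposition \ref{proposition: subgraphs} (restriction to each spanning subgraph), and the reverse inclusion by covering each edge of $G$ by some $G_i$ and transferring the GKM condition. No gaps.
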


\begin{proof}
Proposition \ref{proposition: subgraphs} showed that $R_G$ is contained in $R_{G'}$ for each spanning subgraph $G'$ of $G$, and in particular is contained in $R_{G_i}$ for each subgraph $G_i$.  Conversely,  suppose $\spl{p}{}$ is contained in $\bigcap_{i=1}^k R_{G_i}$.  Every edge $v_jv_k \in E(G)$ is contained in the edge set of (at least) one of the subgraphs, say $G_i$. The spline $\spl{p}{}$ is a generalized spline for $G_i$ by hypothesis, so the GKM condition is satisfied at $v_jv_k$ in $G_i$ and hence in $G$ as well.
\end{proof}

Theorem \ref{theorem: union of graphs} generalizes the definition of $R_G$.  Indeed, for each edge $e \in E(G)$,  consider the subgraph $G_e = (V(G), \{e\})$.  The ring $R_{G_e}$ is exactly the subring of $R^{|V(G)|}$ defined by applying the GKM condition at just the edge $e$.  Theorem \ref{theorem: union of graphs} says
\[R_G = \bigcap_{e \in E(G)} R_{G_e}\]
namely that the generalized splines on $G$ are formed by imposing the GKM condition on every edge of $G$ simultaneously.

The next corollary uses another common family of subgraphs: spanning trees.  We completely identified the generalized splines for trees in Theorem \ref{Tree}.  Thus, the corollary expresses the ring of generalized splines using far fewer intersections than in the original GKM formulation.  Calculating intersections of subrings is subtle, so this corollary reduces the computational complexity of identifying the ring of generalized splines.

\begin{corollary}
\label{cor: SpanningTrees}
If $G$ can be written as a union of spanning trees $T_1, T_2, \ldots T_m$ (whose edges are not necessarily disjoint) and if $\alpha_i = \alpha |_{T_i}$ is the edge--labeling given by restriction for each $i$ then 
\[R_G = \bigcap_{i=1}^m R_{T_i}.\]
\end{corollary}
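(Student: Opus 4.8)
The plan is to obtain this corollary as a direct specialization of Theorem \ref{theorem: union of graphs}. That theorem already handles any collection of spanning subgraphs $G_1, \ldots, G_k$ of $G$ satisfying $V(G_i) = V(G)$ for all $i$ and $\bigcup_{i=1}^k E(G_i) = E(G)$, concluding that $R_G = \bigcap_i R_{G_i}$. The only work, therefore, is to verify that a family of spanning trees $T_1, \ldots, T_m$ whose union is $G$ satisfies these two hypotheses, after which the conclusion is immediate.

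First I would observe that each $T_i$ is by definition a \emph{spanning} tree, so $V(T_i) = V(G)$; this is exactly the vertex condition required by Theorem \ref{theorem: union of graphs}. Second, the hypothesis that $G$ is the union of the trees $T_1, \ldots, T_m$ means precisely that $\bigcup_{i=1}^m E(T_i) = E(G)$, which is the edge condition. Since $\alpha_i = \alpha|_{T_i}$ is exactly the restricted edge-labeling appearing in the theorem, I would then invoke Theorem \ref{theorem: union of graphs} with $G_i = T_i$ and $k = m$ to conclude that $R_G = \bigcap_{i=1}^m R_{T_i}$.

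There is no substantive obstacle here: the mathematical content lies entirely in the earlier theorem, and this corollary simply records the useful special case in which the covering subgraphs are spanning trees (whose spline rings are completely described by Theorem \ref{Tree}, making the intersection concretely computable). The one point worth flagging explicitly is that the trees need not have disjoint edge sets; any overlap among the $E(T_i)$ is harmless, since Theorem \ref{theorem: union of graphs} requires only that the edge sets cover $E(G)$, not that they partition it.
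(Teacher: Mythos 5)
Your proposal is correct and matches the paper exactly: the paper states this corollary immediately after Theorem \ref{theorem: union of graphs} with no separate argument, treating it as the direct specialization to spanning trees that you describe. Your verification of the vertex and edge conditions, and your remark that overlapping edge sets are harmless, are exactly the (implicit) content of the paper's deduction.
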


Figure \ref{figure: triangle spanning trees} shows an example using the 3-cycle and principal-ideal edge--labels.  In this case $R_G$ can be expressed as the intersection of just two rings of generalized splines, each of which is completely known.  In fact,  Theorem \ref{Tree} says that the generalized splines for the two marked paths have the form $(p_1, p_1 + \alpha_{1,4}p_4 + \alpha_{2,4}p_2, p_1+\alpha_{1,4}p_4)$ and $(q_1, q_1 + \alpha_{1,2}q_2, q_1 + \alpha_{1,2}q_2+\alpha_{2,4}q_4)$ for free choices of elements $p_1, p_2, p_4, q_1, q_2, q_4 \in R$. The intersection of these two sets is $R_{C_3}$.


\begin{figure}
\begin{center}
\begin{picture}(63,70)(-10,0)
\put(10,0){\circle*{5}}
\put(30,30){\circle*{5}}
\put(10,60){\circle*{5}}

\put(30,30){\line(-2,3){20}}
\put(9,0){\line(2,3){20}}
\put(10,0){\line(2,3){20}}
\put(11,0){\line(2,3){20}}
\put(10,0){\line(0,1){60}}
\put(9,0){\line(0,1){60}}
\put(11,0){\line(0,1){60}}

\put(-5,-2){$v_4$}
\put(35,28){$v_2$}
\put(-5,58){$v_1$}

\put(25,10){$\alpha_{2,4}$}
\put(25,45){$\alpha_{1,2}$}
\put(-12,28){$\alpha_{1,4}$}
\end{picture}
\hspace{1.5in}
\begin{picture}(63,70)(-10,0)
\put(10,0){\circle*{5}}
\put(30,30){\circle*{5}}
\put(10,60){\circle*{5}}

\put(10,0){\line(0,1){60}}
\put(9,0){\line(2,3){20}}
\put(10,0){\line(2,3){20}}
\put(11,0){\line(2,3){20}}
\put(29,30){\line(-2,3){20}}
\put(30,30){\line(-2,3){20}}
\put(31,30){\line(-2,3){20}}

\put(-5,-2){$v_4$}
\put(35,28){$v_2$}
\put(-5,58){$v_1$}

\put(25,10){$\alpha_{2,4}$}
\put(25,45){$\alpha_{1,2}$}
\put(-12,28){$\alpha_{1,4}$}
\end{picture}
\end{center}
\caption{Two spanning trees whose generalized splines determine $R_{C_3}$} \label{figure: triangle spanning trees}
\end{figure}

Given a connected graph $G$, we could also use Theorem \ref{theorem: union of graphs} to describe $R_G$ in terms of the generalized splines for cycles as follows.  Fix a spanning tree $T$ for $G$.  For each edge $e \in E(G) - E(T)$ let $C_e$ denote the unique cycle contained in $T \cup \{e\}$.  (This cycle exists and is unique by a classical result in graph theory  \cite[Pages 68--69]{W}.)  Let  $C_e'$ be the graph containing the cycle $C_e$ as one connected component and the rest of the vertices of $G$ as the other connected components.  Then
\begin{equation} \label{equation: intersection of cycles}
R_G = R_T \cap \bigcap_{e \in E(G) - E(T)} R_{C_e'}
\end{equation}
by Theorem \ref{theorem: union of graphs}.

However a natural row-reduction of the GKM matrix of $G$ proves this intersection directly.  To motivate our approach, we return to the complete graph on four vertices with principal-ideal edge--labels from Example \ref{example: arbitrary K_4}. The system of equations in Figure \ref{fig 3.2} is consistent precisely when $\spl{q}{}=(q_{1,2}, q_{1,3}, q_{1,4}, q_{2,3}, q_{3,4}) \in R^5$ satisfies the following homogenous system of equations:
\begin{equation}
\label{K4 Homog}
\begin{array}{l}
q_{1,2}\alpha_{1,2} - q_{1,4}\alpha_{1,4} + q_{2,4}\alpha_{2,4} = 0 \\
q_{1,3}\alpha_{1,3} - q_{1,4}\alpha_{1,4} + q_{3,4}\alpha_{3,4}  = 0 \\
q_{2,3}\alpha_{2,3} - q_{2,4}\alpha_{2,4} + q_{3,4}\alpha_{3,4} = 0 \\
\end{array}
\end{equation}
Figure \ref{figure: triangle with GKM matrices} shows the edge--labeled $3$-cycle $v_1, v_2, v_4$ of Figure \ref{figure: triangle spanning trees}, its extended GKM matrix, and a natural row-reduction of its extended GKM matrix.  
\begin{figure}
\begin{picture}(63,70)(-10,25)
\put(10,0){\circle*{5}}
\put(30,30){\circle*{5}}
\put(10,60){\circle*{5}}

\put(10,0){\line(2,3){20}}
\put(30,30){\line(-2,3){20}}
\put(10,0){\line(0,1){60}}

\put(-5,-2){$v_4$}
\put(35,28){$v_2$}
\put(-5,58){$v_1$}

\put(25,10){$\alpha_{2,4}$}
\put(25,45){$\alpha_{1,2}$}
\put(-12,28){$\alpha_{1,4}$}
\end{picture}
$\leftrightarrow
\left[ \begin{array}{rrr|l}
1 & -1 & 0 & q_{1,2}\alpha_{1,2} \\
0 & 1 & -1 & q_{2,4} \alpha_{2,4}\\
1 & 0 & -1 & q_{1,4} \alpha_{1,4}
\end{array} \right]
\leftrightarrow
\left[ \begin{array}{rrr|l}
1 & -1 & 0 & q_{1,2} \alpha_{1,2} \\
0 & 1 & -1 & q_{2,4} \alpha_{2,4} \\
0 & 0 & 0 & q_{1,4} \alpha_{1,4} - q_{1,2}\alpha_{1,2} - q_{2,4}\alpha_{2,4}
\end{array} \right]$
\caption{A triangle, its extended GKM matrix, and a row-reduction}  \label{figure: triangle with GKM matrices}
\end{figure}
The equation that remains is (up to sign) the same as that which occurs in Equation \eqref{K4 Homog}.  In fact, the entire system in Equation \eqref{K4 Homog} arises from the equations (up to sign) for the three subcycles induced by the vertices:
\begin{itemize}
\item $v_1, v_2, v_4$ 
\item $v_1, v_3, v_4$ and
\item $v_2, v_3, v_4$.
\end{itemize}


 
The next theorem generalizes this example.  Together with Remark \ref{remark: syzygies}, we also see it as a first step towards generalizing Rose's work on syzygies of edge-ideals \cite{R1, R2}.

 \begin{theorem}\label{theorem: Intersection}
Suppose that $(G,\alpha)$ is an edge--labeled graph on $n$ vertices.  Fix a spanning tree $T$ for $G$.  For each edge $e \in E(G) - E(T)$ let $C_e$ denote the unique cycle contained in $T \cup \{e\}$. 
Then the extended GKM matrix for $G$ is equivalent to an extended GKM matrix for $T$, followed  for each edge $e \in E(G) - E(T)$ by:

a row that is zero except in the last column, which is $\sum_{e' \in C_e} q_{e'}$ where $q_{e'}$ are arbitrary elements of $\alpha(e')$.
\end{theorem}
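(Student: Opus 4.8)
The plan is to exhibit an explicit sequence of reversible row operations on the extended GKM matrix of $G$ that fixes the tree-edge rows and reduces each non-tree-edge row to the stated form; by Corollary \ref{corollary: solution set is gen splines} this preserves the solution set, so it suffices to track the effect on the matrix itself. First I would order the rows of the extended GKM matrix so that the $|E(T)| = n-1$ rows indexed by tree edges come first and the remaining $|E(G)| - |E(T)|$ rows indexed by non-tree edges come last. The tree-edge block is, by construction (Definition \ref{GKM Matrix Defn}), exactly an extended GKM matrix for $T$, and these rows will never be altered.

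The heart of the argument is a telescoping identity along the paths of $T$. Fix a non-tree edge $e = uv$, oriented $u \to v$, so its row has $+1$ in column $u$, $-1$ in column $v$, last entry $q_e \in \alpha(e)$, and zeros elsewhere. Let $u = w_0, w_1, \ldots, w_k = v$ be the unique path in $T$ from $u$ to $v$ (its existence and uniqueness is the classical fact cited in \cite{W}), so that $C_e = \{e\} \cup \{w_iw_{i+1} : 0 \le i < k\}$. For each path edge $w_iw_{i+1}$ I would choose the sign $\epsilon_i \in \{\pm 1\}$ so that $\epsilon_i$ times its row contributes $+1$ in column $w_i$ and $-1$ in column $w_{i+1}$. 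Summing $\sum_{i=0}^{k-1} \epsilon_i \cdot (\text{row of } w_iw_{i+1})$, the $\pm 1$ entries in the coefficient columns telescope to $+1$ in column $w_0 = u$ and $-1$ in column $w_k = v$, exactly matching the coefficient pattern of the row for $e$, while the last column accumulates $\sum_{i} \epsilon_i q_{w_i, w_{i+1}}$.

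Subtracting this combination of tree rows from the row of $e$ therefore annihilates all of the first $n$ columns and leaves in the last column the element $q_e - \sum_{i} \epsilon_i q_{w_i, w_{i+1}}$. Because each edge ideal is closed under negation, every summand here is again an (arbitrary) element of the corresponding edge ideal, so this last-column entry has precisely the form $\sum_{e' \in C_e} q_{e'}$ with $q_{e'} \in \alpha(e')$, as claimed. The operation used only the coefficients $\pm 1$ on rows that are never themselves modified (the tree rows), so it is reversible; and, crucially, the reductions carried out for distinct non-tree edges are mutually independent, since each touches only its own non-tree row and reads off the same, unchanged tree-edge block. Performing all of them at once produces the asserted matrix.

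The main obstacle is purely bookkeeping: keeping the orientation-dependent signs $\epsilon_i$ straight so that the telescoping genuinely collapses to the single coefficient pattern $+1$ at $u$, $-1$ at $v$, and confirming that no two non-tree reductions interfere. Both are handled by the observations that the tree rows form a fixed block that is only ever added \emph{into} the non-tree rows, and that reorienting any edge merely multiplies its row by $-1$ (Remark \ref{remark: orientations are irrelevant}), which can be absorbed into the choice of $\epsilon_i$; hence the whole argument is insensitive to the arbitrary directions placed on the edges of $G$.
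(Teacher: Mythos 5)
Your argument is correct and follows essentially the same route as the paper's proof: both single out the path in $T$ joining the endpoints of a non-tree edge $e$, form a signed telescoping combination of the corresponding tree rows to cancel the coefficient columns of the row of $e$, and then note that because the $q_{e'}$ are arbitrary elements of their ideals and the signs are units, the surviving last-column entry ranges over exactly $\sum_{e' \in C_e} q_{e'}$. Your explicit remarks on reversibility and on the independence of the reductions for distinct non-tree edges are sound but only make explicit what the paper leaves implicit.
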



 \begin{proof}
 Choose a spanning tree $T$ for the graph $G$.  We assume without loss of generality that the first $n-1$ rows of the GKM matrix for $G$ correspond to the edges in $T$.  The first $n-1$ rows of the GKM matrix of $G$ thus consist of the GKM matrix for $T$, by construction.
 
 Consider each of the other rows in turn.  Each row corresponds to an edge $e$ in $G$ but not $T$.  
We now describe an invertible row operation to eliminate all nonzero entries from the first $n$ columns of the row corresponding to $e$ and describe $R_{G}$ more precisely.  Denote the edges of the cycle $C_e$ by $e_1=e =v_{i_1}v_{i_2}, e_2=v_{i_2}v_{i_3}, \ldots, e_k=v_{i_k}v_{i_1}$.  Let  $c_j \in \{\pm 1\}$ be the entry in the row corresponding to $e_j$ and the column corresponding to vertex $v_{i_j}$ for each $2 \leq j \neq n$.  Denote the $e_j^{th}$ row of the GKM matrix by $r_{e_j}$.  The sum of the scaled rows $\sum_{j=2}^k c_j r_{e_j}$ has $1$ in column $v_{i_2}$, $-1$ in column $v_{i_1}$, $0$ in the rest of the first $n$ columns, and $\sum_{j=2}^k c_jq_j$ in the last column, all by definition of the GKM matrix.  Finally add $\sum_{j=2}^k c_j r_{e_j}$ to the row corresponding to $e$.  This leaves $0$ in the first $n$ columns of row $e$ and $q_e + \sum_{j=2}^k c_jq_j$ in the last entry of the row.

The elements $q_e$ and $q_j$ are arbitrary elements of their respective ideals and $c_j$ is a unit in $R$ for each $j$ so the set of all possible  $q_e + \sum_{j=2}^k c_jq_j$ is the same as the set of all possible $\sum_{e' \in C_e} q_{e'}$.  The result follows.
 \end{proof}

 The last corollary uses this information to describe the generalized splines for $G$ in terms of the generalized splines for cycles, as promised.
  
  \begin{corollary}
  Suppose that $(G,\alpha)$ is an edge--labeled graph on $n$ vertices.  Fix a spanning tree $T$ for $G$.  For each edge $e \in E(G) - E(T)$ let $C_e$ denote the unique cycle contained in $T \cup \{e\}$ together with the other vertices in $G$.  Then
\[ R_G = R_T \cap \bigcap_{e \in E(G) - E(T)} R_{C_e'} \]
  \end{corollary}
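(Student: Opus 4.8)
The statement is precisely the identity already recorded in Equation \eqref{equation: intersection of cycles}, so the plan is twofold: first to confirm that the family $\{T\}\cup\{C_e' : e\in E(G)-E(T)\}$ satisfies the hypotheses of Theorem \ref{theorem: union of graphs}, and second to give the more direct derivation suggested by the surrounding discussion, reading the decomposition off the row-reduced extended GKM matrix of Theorem \ref{theorem: Intersection}. Either route suffices; I would include the first as the quick argument and the second to match the discussion's emphasis on the GKM matrix.

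For the first route I would check the two hypotheses of Theorem \ref{theorem: union of graphs}. The spanning condition holds because $T$ is a spanning tree, so $V(T)=V(G)$, and because $C_e'$ is by definition the cycle $C_e$ together with every remaining vertex of $G$ as an isolated component, so $V(C_e')=V(G)$; this is exactly why the bare cycle $C_e$ is replaced by $C_e'$. For the edge cover, every tree edge lies in $E(T)$ and every non-tree edge $e$ lies in $E(C_e)=E(C_e')$, since $C_e$ is the unique cycle inside $T\cup\{e\}$ and therefore contains $e$. Hence $E(T)\cup\bigcup_{e\notin T}E(C_e')=E(G)$, and the reverse inclusion is automatic since every edge in sight is an edge of $G$, so Theorem \ref{theorem: union of graphs} gives $R_G=R_T\cap\bigcap_{e\notin T}R_{C_e'}$.

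For the second route I would invoke Theorem \ref{theorem: Intersection}, whose row reduction presents the extended GKM matrix for $G$ as an extended GKM matrix for $T$ together with one extra row per non-tree edge $e$ that is zero off the last column, where it reads $\sum_{e'\in C_e}q_{e'}$. The first $n-1$ rows cut out exactly the splines satisfying the tree conditions, namely $R_T$. The extra row for $e$ is solvable for a given $\spl{p}{}\in R_T$ precisely when some choice of $q_e\in\alpha(e)$ balances the tree contributions, and since those tree contributions telescope around $C_e$ to the difference $\spl{p}{v_{i_1}}-\spl{p}{v_{i_2}}$ across $e$, this balancing is exactly the condition $\spl{p}{v_{i_1}}-\spl{p}{v_{i_2}}\in\alpha(e)$; together with membership in $R_T$ this is membership in $R_{C_e'}$, the isolated vertices of $C_e'$ imposing nothing. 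The one place demanding care---and the closest thing to an obstacle---is this matching step: one must track that, once $\spl{p}{}\in R_T$ forces the tree components of the $q_{e'}$, the single relation $\sum_{e'\in C_e}q_{e'}=0$ collapses to the lone GKM condition across $e$, so that the extra row contributes exactly $R_{C_e'}$ rather than some strictly weaker or stronger condition.
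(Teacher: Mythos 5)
Your proposal is correct and matches the paper: your second route (reading the decomposition off the row-reduced extended GKM matrix of Theorem \ref{theorem: Intersection}) is exactly the paper's proof of this corollary, and your first route via Theorem \ref{theorem: union of graphs} is the argument the paper already records as Equation \eqref{equation: intersection of cycles}. Both of your verifications --- the spanning/edge-cover check for the family $\{T\}\cup\{C_e'\}$, and the telescoping of the extra cycle row to the single GKM condition across $e$ --- are sound.
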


\begin{proof}
Consider an edge $e$ outside of the spanning tree $T$ and its corresponding cycle $C_e$.  The previous theorem showed that the submatrix of an extended GKM matrix for $G$ given by the rows indexed by the edges $e' \in E(C_e)$ form an extended GKM matrix for the cycle $C_e$.  The vector $\spl{p}{} \in R^{|V|}$ solves an extended GKM matrix for $G$ if and only if it simultaneously solves the corresponding extended GKM matrices for $T$ and all of the $C_e$ for $e \in E(G)-E(T)$.  This proves the claim.
\end{proof}

\section{Open questions}
\label{section: open}

We end with several open questions, extending some of the major research problems for splines and GKM theory to the context of generalized splines.

Most research into what we call generalized splines focuses on particular examples, whether because of explicit hypotheses (e.g. a particular choice of the ring $R$, the graph $G$, or the edge--labeling function $\alpha$) or implicit hypotheses (e.g. that edge--labels be principal). Special cases remain very important, both for applications and for data to build the general theory.

\begin{question}
Identify $R_G$ in important special cases: for instance, when all edge--labels $\alpha(e)$ are principal ideals; or when $R$ is a particular ring (integers, polynomial rings, ring of Laurent polynomials); or when $G$ is a particular graph or family of graphs (cycles, complete graphs, bipartite graphs, hypercubes).
\end{question}

Splines on complete graphs are particularly important for approximation theory, where they appear as the Alfeld split of a simplex (for a proof see \cite[Section 3.1]{T3}).

Billera first asked the following question, seeking an interpretation of $r$-smoothness in the context of equivariant cohomology.  We extend Billera's question to ask about the analogue of $r$-smoothness for generalized splines over arbitrary rings.

\begin{question}
Let $(G,\alpha)$ be an edge--labeled graph.  Define the function $\alpha^r: E \rightarrow \mathcal{I}$ by the condition that for each edge $e$ the image $\alpha^r(e)$ is the $r^{th}$ power $\left( \alpha(e) \right)^r$.  The $r$-smooth generalized splines are the elements of the ring $R_{G, \alpha^r}$.  We ask how the $r$-smooth generalized splines compare for various $r$.  Billera asks for a geometric interpretation of $r$-smoothness in the context of equivariant cohomology rings.
\end{question}

As a module, the generalized splines $R_G$ can also be viewed as group representations: for instance, the group of automorphisms of the graph $G$ that preserve the edge--labeling naturally induces a representation on $R_G$.  Representations obtained in this and similar ways are often intrinsically interesting \cite{F, T2} and can also be a powerful tool with which to approach other questions in this section \cite{T2}.

\begin{question}
Given a specific automorphism group, what are the induced representations on $R_G$ (in terms of irreducible representations, say)? For what families of graphs are there nontrivial representations on $R_G$?
\end{question}

Propositions \ref{proposition: subgraphs} and \ref{proposition: edge deletion} and Sections \ref{section: submodules} and \ref{section: intersection} all use combinatorial aspects of graphs to analyze the ring of generalized splines.  More recently Handschy-Melnick-Reinders \cite{HMR} and Bowden-Cao-Hagen-King-Reinders \cite{BHKR} use deletion and contraction to study splines on cycles.  We believe that these are special cases of a more general relationship between the underlying combinatorics and geometry. 

\begin{question}
How do classical graph-theoretic constructions (like deletion-contraction) affect the algebraic structure of splines $R_G$?
\end{question}

Theorem \ref{thm: module direct sum},  Theorem \ref{Tree}, and Theorem \ref{thm: Complete2} are part of a larger program to identify useful bases for splines and GKM modules \cite{H, GT, GuZ2}.  The next question extends that program to generalized splines.

\begin{question}
Given a graph $G$, find a minimal generating set (or basis, if $R$ is an integral domain) for the generalized splines $R_G$.  If $G$ is a particular family of graphs (cycles, complete graphs, etc.), can we find a minimal generating set (or basis) for $R_G$? 
\end{question}

More specifically, geometers think about bases with particular ``upper-triangularity" properties that arise in many important examples, like Schubert classes, Bialynicki-Birula classes, and the canonical classes of Knutson--Tau \cite{KT} and Goldin--Tolman \cite{GT} (see also work of Harada--Tymoczko \cite{HT}).  Theorem \ref{thm: Complete2} is an initial step in constructing {\em flow-up bases} for generalized splines.

\begin{question}
What is the right definition for a flow-up class in the module of generalized splines?  Under what conditions is there a flow-up basis for the generalized splines?
\end{question}

Answering the previous question may require further extending generalized splines so that the vertices are labeled by different modules $M_v$ rather than a fixed ring $R$, as described in the Introduction to this paper.  Characterizing those splines would have immediate implications in geometric applications like computing equivariant intersection homology.

\begin{question}
Which of the results in this paper extend to generalized splines over modules?  Is there an algorithm or an explicit formula to construct flow-up basis classes for generalized spines over modules?
\end{question}


\bibliographystyle{amsalpha}

\end{document}